\documentclass[11pt]{article}
\usepackage[margin=1in]{geometry}
\usepackage[T1]{fontenc}
\usepackage{lmodern}
\usepackage{amsmath,amssymb,amsthm,mathtools,microtype}
\usepackage[colorlinks=true,linkcolor=black,citecolor=black,urlcolor=black]{hyperref}
\hypersetup{
 pdftitle={Uniform asymptotics and the Weil--Petersson constant},
 pdfauthor={Kefeng Liu and Hao Xu},
 pdfsubject={Large-genus asymptotics of Weil--Petersson volumes}}
\newtheorem{theorem}{Theorem}[section]
\newtheorem{lemma}[theorem]{Lemma}
\newtheorem{proposition}[theorem]{Proposition}
\newtheorem{corollary}[theorem]{Corollary}
\theoremstyle{remark}
\newtheorem{remark}[theorem]{Remark}
\numberwithin{equation}{section}
\newcommand{\WP}{\mathrm{WP}}
\newcommand{\dd}{\mathrm d}

\newcommand{\st}{\mathrm{stable}}
\newcommand{\bzero}{\mathbf 0}
\allowdisplaybreaks[1]
\title{Uniform asymptotics and the Weil--Petersson constant}
\author{Kefeng Liu \\ Chongqing University of Technology\\ kefengliu@cqut.edu.cn
\and 
Hao Xu \\ Chongqing University of Technology\\ haoxu@cqut.edu.cn}
\date{}
\begin{document}
\maketitle

\begin{abstract}
In their celebrated work on the large-genus asymptotic expansion of
Weil--Petersson volumes, Mirzakhani and Zograf left open the question
of whether the Weil--Petersson constant satisfies $C_{\WP}=\pi^{-1/2}$.
We resolve the conjecture affirmatively.
Our strategy is to consider a family of spectral curves interpolating
between $y=z-4z^3$ and $y=\sin(2\pi z)/(2\pi)$.
For each fixed number $n\geq2$ of marked points, we normalize the cusp
coefficients using the action integral and the derivative at
its first positive zero. We prove that the logarithmic derivatives
of these normalized coefficients with respect to the family parameter
are $O_n(g^{-1})$, uniformly along the family.
The proof combines the descendant recursion, positivity, the dilaton
equation, KdV, and coefficient asymptotics for the Painlev\'e I equation.
\end{abstract}
\setcounter{tocdepth}{1}
{\small\tableofcontents}

\section{Introduction}
Let $V^{\WP}_{g,n}$ denote the ordinary cusp Weil--Petersson volume
of genus $g$ with $n$ labelled markings. For each fixed $n\geq0$,
Mirzakhani and Zograf \cite{MZ} proved
\begin{equation}\label{eq:mz}
 V^{\WP}_{g,n}
 =C_{\WP}\frac{(4\pi^2)^{2g-3+n}\Gamma(2g-2+n)}{\sqrt g}
 \bigl(1+O_n(g^{-1})\bigr),
\end{equation}
where $C_{\WP}>0$ is independent of $n$.
The purpose of this paper is to determine $C_{\WP}$ by comparing
the WP volumes with the coefficients of a cubic spectral curve.
We use \eqref{eq:mz} to specify the conventional normalization.

Although many recursive formulas are available for intersection numbers on moduli spaces of curves, explicit computations in high genus remain challenging. 
Zograf \cite{Zog} developed an efficient algorithm for computing Weil--Petersson volumes in high genus.
He \cite{Zog} conjectured the expansion \eqref{eq:mz}, with
$C_{\WP}=\pi^{-1/2}$, on the basis of numerical calculations.
Mirzakhani \cite{Mir} proved initial large-genus estimates, and
Mirzakhani--Zograf \cite{MZ} established the expansion with an
undetermined constant. Building on these estimates, Mirzakhani pioneered the study of random
hyperbolic surfaces; see, for example, \cite{AM2,LW,WX} for the distribution of the first eigenvalue of a random hyperbolic surface in large genus. Related
large-genus problems for intersection numbers and their applications to quantum gravity and the study of Masur--Veech volumes
are considered in \cite{Agg,EGGGL,GPRS,GNYZ,GY,Hua}.

Eynard--Garcia-Failde--Gregori--Lewa\'nski--Schiappa
\cite{JT} derived the value
\(C_{\WP}=1/\sqrt{\pi}\) under a conjectural resurgent structure and the associated
identification of the relevant singularity data on the
Weil--Petersson generating series.

In our analysis, the constant \(C_{\WP}=1/\sqrt{\pi}\) follows from uniform
estimates for the descendant recursion and a coefficient
asymptotic for Painlev\'e I. We do not assume resurgence of the
Weil--Petersson series. For other connections between Painlev\'e
equations and intersection numbers, see \cite{GNYZ,IZ}.

We also note that the existence of an asymptotic expansion of the Weil--Petersson volume of the moduli space of hyperbolic surfaces
with boundaries was proved by Anantharaman-Monk \cite{AM}.

For $0\leq t\leq1$, put
\begin{align}
 H(s)&=\prod_{j=2}^{\infty}(1-s/j^2)^{-1},\qquad\qquad
 q_t(s)=\frac{H(s)^t}{1-s},\label{eq:family-q}\\
 y_t(z)&=\frac{z}{q_t(4z^2)}
 =z(1-4z^2)H(4z^2)^{-t}.\label{eq:family-y}
\end{align}
The powers are defined by the analytic branch equal to $1$ at the
origin. Let $V^{(t)}_{g,n}$ be the cusp coefficients of topological
recursion for $x=z^2/2$ and $y=y_t$, with the conventions of
Section~\ref{sec:exact}. Euler's product for the sine function gives
$y_0(z)=z-4z^3$ and $y_1(z)=\sin(2\pi z)/(2\pi)$.
The first positive zero is $a=1/2$ for every $t$. Define
\begin{equation}\label{eq:action}
 a=\frac12,\qquad
 A_t=\int_0^a2z y_t(z)\,\dd z,\qquad
 \kappa_t=-2a y_t'(a)=2^{1-t}.
\end{equation}

\begin{theorem}[Uniform variation]\label{thm:transport}
For each fixed $n\geq2$, set
\begin{equation}\label{eq:amplitude}
 \mathcal B_{g,n}(t)=
 \frac{V^{(t)}_{g,n}A_t^{2g+n-5/2}\sqrt{\kappa_t}}
 {\Gamma(2g+n-5/2)}.
\end{equation}
As $g\to\infty$,
\begin{equation}\label{eq:uniform-transport}
 \sup_{0\leq t\leq1}
 \left|\frac{\dd}{\dd t}\log\mathcal B_{g,n}(t)\right|
 =O_n(g^{-1}).
\end{equation}
Consequently, uniformly for $t,u\in[0,1]$,
\begin{equation}\label{eq:transport-ratio}
 \mathcal B_{g,n}(t)/\mathcal B_{g,n}(u)=1+O_n(g^{-1}).
\end{equation}
\end{theorem}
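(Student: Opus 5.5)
Since $\log\mathcal B_{g,n}(t)=\log V^{(t)}_{g,n}+(2g+n-5/2)\log A_t+\tfrac12\log\kappa_t-\log\Gamma(2g+n-5/2)$ and $\tfrac12\log\kappa_t=\tfrac12(1-t)\log 2$, the bound \eqref{eq:uniform-transport} is equivalent to
\begin{equation*}
 \frac{\dd}{\dd t}\log V^{(t)}_{g,n}
 =-(2g+n-5/2)\frac{\dot A_t}{A_t}+\frac{\log 2}{2}+O_n(g^{-1}),
\end{equation*}
uniformly in $t\in[0,1]$. I would prove this by differentiating topological recursion in the parameter $t$. The spectral curve $y_t=z(1-4z^2)H(4z^2)^{-t}$ has odd expansion $y_t(z)=\sum_k s_k(t)z^{2k+1}$, so the coefficients $V^{(t)}_{g,n}$ are obtained from the Kontsevich--Witten intersection numbers twisted by the kappa/times shift $\sum_k p_k(t)\kappa_k$. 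Here the $p_k(t)$ are determined from $q_t(s)$, with $\log q_t(s)=-\log(1-s)+t\log H(s)$ linear in $t$. Consequently $\frac{\dd}{\dd t}$ of the generating function is the action of the operator $\sum_k c_k\,\partial/\partial p_k$ with constant coefficients $c_k$ read off from $\log H(s)=\sum_{k\ge1}\zeta^*(2k)s^k/k$, where $\zeta^*(2k)=\zeta(2k)-1$. Each $\partial/\partial p_k$ inserts one extra $\kappa_k$, i.e.\ one extra marked point carrying $\tau_{k+1}$ that is then forgotten. This gives the exact identity
\begin{equation*}
 \frac{\dd}{\dd t}V^{(t)}_{g,n}=\sum_{k\ge1}c_k\,W^{(t)}_{g,n;k},
\end{equation*}
where $W^{(t)}_{g,n;k}$ is a cusp coefficient with one extra descendant $\tau_{k+1}$ in the same twisted theory, suitably normalized.

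The second step is to show that for fixed $k$ the ratio $W^{(t)}_{g,n;k}/V^{(t)}_{g,n}$ has a large-genus expansion $\alpha_k(t)\,g+\beta_k(t)+O_{n,k}(g^{-1})$. The tail in $k$ also has to be controlled uniformly, using $c_k\sim 4^{-k}/k$ against the growth of the descendant ratios. For this I would use the descendant recursion (the Mirzakhani-type recursion valid for the whole family) together with positivity: all the twisted coefficients are nonnegative because $H$ has positive Taylor coefficients. Positivity gives monotonicity and comparison bounds of the form $W_{g,n;k}\le C^k\, g\,V_{g,n+1}$-ish. The dilaton and string equations reduce the insertion of $\tau_1,\tau_0$ to multiplication by $2g-2+n$ and related quantities. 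The leading coefficient is expected to be $\alpha_k(t)=2\,\partial_{c_k}\log A_t$-type, i.e.\ the ratio of descendant insertions tends to $(\text{const})^k$ determined by the first zero $a=1/2$ of $y_t$. This is the heuristic that the dominant instanton has action $A_t$. Summing over $k$ then gives $2g\,\dot A_t/A_t$ at leading order, matching the required derivative.

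The main obstacle is the subleading term. One must show that the $O(1)$ term is exactly $-(n-5/2)\dot A_t/A_t+\tfrac12\log 2$ and that nothing worse than $O(g^{-1})$ remains, uniformly in $t$. Here I would use KdV (the string/KdV hierarchy satisfied by the twisted potential, since the shift is by times) to express the $g$-dependence of the ratios through a two-point quantity. That quantity satisfies a Painlev\'e I-type double-scaling equation near the relevant singularity, and the known coefficient asymptotics of the Painlev\'e I solution supply the second-order term. The $\kappa_t=-2ay_t'(a)$ factor then emerges as the local quadratic parameter at the zero. The sub-leading errors should be bounded uniformly via positivity, comparing with the endpoints $t=0$ and $t=1$.

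Finally, \eqref{eq:transport-ratio} follows by integrating \eqref{eq:uniform-transport} over $[u,t]$, which gives $|\log\mathcal B_{g,n}(t)-\log\mathcal B_{g,n}(u)|=O_n(g^{-1})$, and then exponentiating.
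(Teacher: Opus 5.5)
Your reduction of \eqref{eq:uniform-transport} to $\frac{\dd}{\dd t}\log V^{(t)}_{g,n}=-(2g+n-5/2)\dot A_t/A_t+\frac{\log 2}{2}+O_n(g^{-1})$ is correct, and so is your final integration step. The part in between is not a proof, and it has a genuine gap.

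The leading term is left as \emph{expected}, on an instanton heuristic. The $O(1)$ term is the actual content of the theorem, and the tool you propose for it does not exist for $0<t\le1$:
\begin{itemize}
\item Painlev\'e~I arises only at the cubic endpoint. There $s_2=12$ is the only nonzero time, and string plus KdV collapse to $u=x+6u^2+\hbar^2u''$.
\item For general $t$, infinitely many $s_k(t)$ are nonzero, so the string equation involves every higher Gelfand--Dickey flow. No coefficient asymptotic is proved for that equation.
\item Appealing to double-scaling universality near the zero of $y_t$ is exactly the resurgence-type input the theorem is meant to avoid.
\item Positivity comparison with the endpoints cannot rescue this. The available bounds, such as $J^{(0)}\le J^{(t)}\le B_t^{2g-2+n}J^{(0)}$, have slack exponential in $g$ and say nothing about an $O(1)$ term in a logarithmic derivative.
\end{itemize}
Your Step~1 premise is also false. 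The kappa-time series is $-\log$ of the Gaussian transform $1-\sum_k s_k(t)v^{k-1}$ of $y_t$, and that transform is not multiplicative. So linearity of $\log q_t$ in $t$ does not make the kappa times affine in $t$; one can check this already on $s_3(t)=240(b_2-b_1^2)$. What survives is the chain rule $\frac{\dd}{\dd t}\log V_{g,n}=\sum_k\dot s_k\,\partial_{s_k}\log V_{g,n}$ with $t$-dependent coefficients, and that suffices.

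The missing idea is that the transport needs no Painlev\'e input: the $O(1)$ term comes from an exact algebraic cancellation.
\begin{itemize}
\item \emph{Dilaton quotient.} The dilaton equation gives $NV_{g,n}/V_{g,n+1}=\sum_kw_kr_k$, with $N=2g+n-2$, $r_k=J_{g,n+1}(k,0^n)/V_{g,n+1}$, $w_1=1/12$ and $w_k=-s_k/p_k$. The chain rule gives $\frac{\dd}{\dd t}\log V_{g,n}=(V_{g,n+1}/V_{g,n})\sum_k\chi_kr_k$ with $\chi_k=-\dot w_k$. Dividing eliminates the unknown marking ratio and leaves $N\sum_k\chi_kr_k/\sum_kw_kr_k$.
\item \emph{First descendant correction.} One needs $r_k=1-E_k/g+O_n((k+1)^4/g^2)$, uniformly in $k$ and $t$, with $E_k$ an explicit quadratic in $k$. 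This comes from the adjacent-difference form of the descendant recursion, using:
\begin{itemize}
\item positivity and monotonicity;
\item the leading child ratios $A_t/(2g)$, from the dilaton marking ratio and the KdV genus ratio;
\item separating terms of order $O(g^{-2})$, via the cubic upper bound and $B_tA_t/A_0<2$;
\item the kernel moments $B_t=\sum\Delta_L$ and $M_t=\sum L\Delta_L$.
\end{itemize}
\item \emph{Moment identities.} Applying $a\partial_a$ to the action integral at the zero $a=1/2$ gives $\sum w_k=A$, $\sum kw_k=-A/2$ and $\sum k^2w_k=A/4-\kappa/16$. Differentiating in $t$ gives the corresponding $\chi$-moments.
\item \emph{Cancellation.} With $T_n=\sum w_kE_k$ and $Q_n=\sum\chi_kE_k$, substitution yields $Q_n+(\dot A/A)T_n=-\dot A/4+A\dot\kappa/(4\kappa)$. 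Hence the derivative is exactly $-(2g+n-5/2)\dot A/A-\frac12\dot\kappa/\kappa+O_n(g^{-1})$.
\end{itemize}
So $\kappa_t$ enters through $y_t'(a)$ in the second moment, not through a local Painlev\'e analysis. Painlev\'e~I is used only afterwards, at $t=0$, to fix the absolute constant.
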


\begin{corollary}\label{cor:constant}
The constant in \eqref{eq:mz} is
\begin{equation}\label{eq:result}
 C_{\WP}=\frac1{\sqrt\pi}.
\end{equation}
\end{corollary}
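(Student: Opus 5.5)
The plan is to evaluate $\mathcal B_{g,n}(t)$ at the two endpoints $t=1$ and $t=0$. At $t=1$ it is expressed through $C_{\WP}$ via \eqref{eq:mz}. At $t=0$ it is computed independently from the Painlev\'e I asymptotics. The two values are then matched using \eqref{eq:transport-ratio}. Throughout I fix one $n\geq2$, say $n=2$; since $C_{\WP}$ is independent of $n$ this is enough.

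\emph{Endpoint $t=1$.} Here $y_1(z)=\sin(2\pi z)/(2\pi)$ is the Weil--Petersson spectral curve. With the conventions of Section~\ref{sec:exact}, Eynard--Orantin's theorem identifies $V^{(1)}_{g,n}$ with $V^{\WP}_{g,n}$. I would check carefully that no stray factor such as $2^{2g-2+n}$ is present, since it would shift the constant. An integration by parts gives
\[
\int_0^{1/2} z\sin(2\pi z)\,\dd z=\frac1{4\pi},\qquad\text{so}\qquad A_1=\frac1{4\pi^2},\qquad \kappa_1=1 .
\]
Inserting \eqref{eq:mz} and using $\Gamma(2g-2+n)/\Gamma(2g+n-5/2)=(2g)^{1/2}(1+O(g^{-1}))$, I get
\[
\mathcal B_{g,n}(1)=C_{\WP}\,(4\pi^2)^{-1/2}\sqrt2\,\bigl(1+O_n(g^{-1})\bigr)=\frac{C_{\WP}}{\sqrt2\,\pi}\bigl(1+O_n(g^{-1})\bigr).
\]

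\emph{Endpoint $t=0$.} Here $y_0=z-4z^3$, a polynomial curve, so $V^{(0)}_{g,n}$ are finite combinations of $\psi$-intersection numbers. Concretely, they are Witten--Kontsevich correlators with the times $t_0,t_1,t_2$ shifted according to the coefficients of $y_0$. The steps are as follows.
\begin{enumerate}
\item Use the string and dilaton equations to express the $n$-point cusp coefficients through derivatives of the specialized free energy.
\item By KdV, the second derivative $u=\partial_{0}^2F$ of that free energy satisfies, after rescaling, the Painlev\'e I equation $u^2+\tfrac16u''=x$ or an equivalent normalization.
\item The coefficient asymptotics of the formal PI solution, with its known Stokes constant (Kapaev; Joshi--Kitaev), then give the genus-$g$ growth $\Gamma(2g-1/2)\,A^{-2g}$ with an explicit prefactor.
\item Check that the instanton action matches $A_0=\int_0^{1/2}2z(z-4z^3)\,\dd z=1/24$, and that $\kappa_0=2$.
\item Taking $n$ derivatives shifts the gamma argument by $n$ and produces $\mathcal B_{g,n}(0)=L\bigl(1+O_n(g^{-1})\bigr)$ with an explicit $L$. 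The expected value is $L=1/(\sqrt2\,\pi^{3/2})$.
\end{enumerate}

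\emph{Conclusion.} By \eqref{eq:transport-ratio}, $\mathcal B_{g,n}(1)/\mathcal B_{g,n}(0)\to1$. Hence $C_{\WP}/(\sqrt2\,\pi)=L$, which gives $C_{\WP}=\sqrt2\,\pi L=\pi^{-1/2}$.

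The main obstacle is the $t=0$ computation. The difficulty is translating exactly between the cusp coefficients for $y=z-4z^3$ and the normalized PI solution, tracking every power of $2$ and $3$ in the time shifts and rescalings. One must also pass from the coefficient asymptotics of $u$ to those of the $n$-th derivative of $F$ with relative error $O(g^{-1})$. As a sanity check, I would compare the resulting $L$ against low-genus exact values computed from the recursion.
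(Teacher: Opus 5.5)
Your route is the paper's: fix $n=2$, anchor $\mathcal B_{g,2}(0)$ by Painlev\'e~I, and transport to $t=1$ with Theorem~\ref{thm:transport}. Your $t=1$ computation, $\mathcal B_{g,n}(1)=C_{\WP}(\sqrt2\,\pi)^{-1}\bigl(1+O_n(g^{-1})\bigr)$, is correct and is exactly the last step of the paper's argument. The $t=0$ half, however, contains an arithmetic error and leaves the decisive number unproved.

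\textbf{The action at $t=0$.} You have $A_0=\int_0^{1/2}(2z^2-8z^4)\,\dd z=\frac1{12}-\frac1{20}=\frac1{30}$, not $\frac1{24}$. With $1/24$ your step~4 check would fail. After rescaling, the Painlev\'e~I action $A_{\rm PI}=8\sqrt3/5$ gives the exponential rate $6912/A_{\rm PI}^2=900=30^2$. So with your value $\mathcal B_{g,2}(0)$ would grow like $(30/24)^{2g}$ instead of converging.

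\textbf{The constant $L$.} $L$ is the whole content of the corollary, and you only state its ``expected'' value. The paper computes it as follows.
\begin{itemize}
\item At $s_2=12$, string plus KdV give $u=x+6u^2+\hbar^2u''$. Then $V^{(0)}_{g,2}=u_g(0)=c_g$ directly, so for $n=2$ your step~5 (passing to higher derivatives) is not needed.
\item The recurrence $c_1=12$, $c_g=6\sum_{h=1}^{g-1}c_hc_{g-h}+144(5g-6)(5g-4)c_{g-1}$ becomes the Painlev\'e~I coefficient recurrence under $c_g=-6912^g v_g/12$.
\item Garoufalidis--Its--Kapaev--Mari\~no prove the zero-instanton asymptotic
\[
 v_g=-\frac{3^{1/4}}{2\pi^{3/2}}A_{\rm PI}^{-2g+1/2}\Gamma(2g-1/2)\bigl(1+O(g^{-1})\bigr).
\]
\item Using $3^{1/4}A_{\rm PI}^{1/2}=2\sqrt{6/5}$, this gives $V^{(0)}_{g,2}=30^{2g-1/2}\Gamma(2g-1/2)(2\pi^{3/2})^{-1}\bigl(1+O(g^{-1})\bigr)$.
\end{itemize}
With $A_0=1/30$ and $\kappa_0=2$ this yields $L=1/(\sqrt2\,\pi^{3/2})$. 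Your final line $C_{\WP}=\sqrt2\,\pi L=\pi^{-1/2}$ then goes through.
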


\begin{theorem}[First correction]\label{thm:first-volume-correction}
For every fixed $n\geq2$, uniformly for $0\leq t\leq1$,
\begin{equation}\label{eq:main-refinement}
 \mathcal B_{g,n}(t)=\frac1{\sqrt2\pi^{3/2}}
 \left(1+\frac{\mathfrak b_n(t)}g+O_n(g^{-2})\right).
\end{equation}
Put $B_t=2^t$ and $M_t=3tB_t/4$. Then
\begin{align}
 \mathfrak b_2(t)={}&A_t\left[-\frac23+
 \left(1-\frac{\pi^2}{6}\right)t
 +2^t\left\{-\frac{29}{24}
 +\left(\frac{77}{32}-\frac{\pi^2}{8}\right)t
 -\frac{33}{32}t^2\right\}\right],\label{eq:main-b2}\\
 \mathfrak b_n(t)={}&\mathfrak b_2(t)
 +A_t(n-2)\left[\frac{B_t(6-n)}2+3-n-2M_t\right].\notag
\end{align}
\end{theorem}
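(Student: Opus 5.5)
The plan is to prove Theorem~\ref{thm:first-volume-correction} in two stages. First we determine the value of $\mathcal B_{g,n}$ at a single convenient point $t=0$, together with its first correction there. Then we transport both across $[0,1]$ using the variation estimate of Theorem~\ref{thm:transport}, sharpened to second order.

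At $t=0$ the curve $y_0=z-4z^3$ is cubic, so $V^{(0)}_{g,n}$ are (rescaled) $\psi$-class intersection numbers, essentially $\langle\tau_0^{n}\ldots\rangle$-type sums. Their generating function satisfies KdV, and its genus expansion is governed by the Painlev\'e~I equation. The coefficient asymptotics of the Painlev\'e~I tritronqu\'ee solution fix the Stokes constant. Given $A_0=\int_0^{1/2}2z(z-4z^3)\,\dd z$ and $\kappa_0=2$, this yields $\mathcal B_{g,n}(0)=\frac1{\sqrt2\pi^{3/2}}(1+\mathfrak b_n(0)/g+O(g^{-2}))$. The constant $\mathfrak b_n(0)$ comes from the next term of the Painlev\'e~I coefficient expansion. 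The dilaton and string equations handle the dependence on $n$; this should produce the $(n-2)$-linear/quadratic structure in the stated formula for $\mathfrak b_n$.

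Next, write $\frac{\dd}{\dd t}\log\mathcal B_{g,n}(t)=\frac{\beta_n(t)}g+O_n(g^{-2})$ uniformly in $t$. This refines \eqref{eq:uniform-transport}; we derive it from the same descendant-recursion computation used in the proof of Theorem~\ref{thm:transport}. Differentiating topological recursion in $t$ inserts the deformation $\partial_t y_t\,\dd x=-y_t\log H(4z^2)\,\dd x$. By the variational formula this is an integral of $\omega_{g,n+1}$ against a meromorphic form. In terms of coefficients, it is a weighted sum of descendant insertions $\sum_k c_k(t)V^{(t)}_{g,n}[\tau_k]$ with explicit weights from the Taylor coefficients of $\log H$, i.e.\ $\zeta(2k)-1$. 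Positivity and the known ratio asymptotics of descendant coefficients give $V[\tau_k]/V\sim(\text{const})^k$ with $1/g$ corrections. Summed against $c_k$, the leading order reproduces exactly $\partial_t\log$ of $A_t^{2g+n-5/2}\sqrt{\kappa_t}$, which is why the $O(g^{-1})$ bound holds. The subleading $1/g$ terms give $\beta_n(t)$.

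Integrating, $\mathcal B_{g,n}(t)=\mathcal B_{g,n}(0)\bigl(1+g^{-1}\int_0^t\beta_n+O(g^{-2})\bigr)$, so $\mathfrak b_n(t)=\mathfrak b_n(0)+\int_0^t\beta_n(s)\,\dd s$. The terms $2^t$, $t2^t$, $t^22^t$ and $\pi^2/6=\zeta(2)$ should arise from $\kappa_t=2^{1-t}$ and from $H$ evaluated near $s=1$. We would verify this by checking that the claimed expression satisfies $\frac{\dd}{\dd t}\mathfrak b_n=\beta_n$ and matches at $t=0$.

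The main obstacle is the second-order uniform expansion of the descendant ratios $V[\tau_k]/V$. The error must be $O(g^{-2})$ uniformly in $k$ with enough decay to sum against $c_k$. Near the scale $k\sim\sqrt g$ the naive expansion degenerates. We would first try to control the tail via positivity and a crude geometric bound, and apply KdV only for $k\le g^{1/3}$.
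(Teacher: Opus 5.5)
Your outline is the architecture of the paper's proof: anchor at $t=0$ using the first correction of the Painlev\'e~I coefficients and the exact cubic fixed-$n$ formula, then integrate $\frac{\dd}{\dd t}\log\mathcal B_{g,n}=\beta_n(t)/g+O(g^{-2})$. However, the step that carries all the content, the derivation of $\beta_n(t)$, is not supplied, and the tools you name would not supply it.

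Write $\frac{\dd}{\dd t}\log V_{g,n}=N\sum_k\chi_kr_k/\sum_kw_kr_k$ with $N=2g+n-2$ and $r_k=J_{g,n+1}(k,0^n)/V_{g,n+1}$. Then you need $r_k$ through order $g^{-2}$ with remainder $O((k+1)^6/g^3)$. A remainder $O(g^{-2})$ in $r_k$ only reproduces Theorem~\ref{thm:transport}. The $g^{-2}$ coefficient is not determined by positivity and leading ratio asymptotics, and the KdV identity \eqref{eq:kdv} controls only the $\tau_1$ insertion. In the paper this coefficient comes from the adjacent-difference form of the descendant recursion (Lemma~\ref{lem:mono}, Proposition~\ref{prop:second}). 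That derivation needs four inputs:
\begin{enumerate}
\item first-order expansions of children with two positive degrees (Lemma~\ref{lem:multi-descendants});
\item the child ratios to relative order $g^{-1}$, where the genus ratio $V_{g-1,n+2}/V_{g,n}=1+\gamma_n/g$ receives $-6A(n-2)$ from the stable genus-zero $(0,3)$ products in KdV;
\item the separating terms with small component $(0,3)$ or $(1,1)$, which are exactly of order $g^{-2}$ and enter with weights $\binom{N-1}{2}$, $J_{1,1}(0)=b_1/2$ and $J_{1,1}(1)=1/2$;
\item the cutoff corrections at $L+d\le2$, together with the exceptional values $k=1,2$.
\end{enumerate}
Summing the resulting expansion against the dilaton weights gives the second-order marking ratio. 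The moments $S_2,S_3,S_4$, which involve $\mu_2$, then make the forcing term an exact differential.

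These low-genus, low-degree effects are visible in the final formula. The term $(1-\pi^2/6)t$ outside the $2^t$ bracket comes from $b_1=[s]q_t(s)=1+t(\zeta(2)-1)$, a Taylor coefficient at $s=0$. It enters through $J_{1,1}(0)$ and $w_2=-b_1/20$, not from $H$ near $s=1$; the latter produces only the $\pi^2/8$ term, through $\mu_2$. So your final check that the claimed expression satisfies $\frac{\dd}{\dd t}\mathfrak b_n=\beta_n$ has nothing to check against until these terms are computed.

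There is also a range problem. The second-order descendant expansion needs $N\geq4$ markings, because the merge children must themselves admit the first-order expansion, and Proposition~\ref{prop:first-correction} requires at least three markings. The forcing equation is therefore available only for parents with $n\geq3$. The paper handles this as follows:
\begin{itemize}
\item it integrates $\mathfrak F_3$ from $\beta_3^{(0)}=-1/80$;
\item it passes to $n=2$ via $\mathcal B_{g,3}/\mathcal B_{g,2}=1+(\alpha_2+1/4)/g+O(g^{-2})$;
\item it obtains every $\mathfrak b_n$ from $\mathfrak b_2$ by summing the first-order marking corrections, rather than transporting each $\beta_n$ separately.
\end{itemize}
Transporting $n=2$ directly, as you propose, would need first-order expansions of two-marked descendants, which the established estimates do not provide.

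For the tail you need no geometric bound on the ratios themselves. For $k>\sqrt g$, monotonicity $0\le J/V\le1$ gives a polynomial error bound. The geometric decay of $w_k$ and $\dot s_k/p_k$ then makes it summable.
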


Theorem~\ref{thm:transport} is proved in Section~\ref{sec:transport}.
We first obtain the descendant expansion in
Proposition~\ref{prop:first-correction}. Substitution in the dilaton
equation, followed by three moment identities, gives the terms
involving $\dot A_t$ and $\dot\kappa_t$. The remainder is summable
uniformly in $t$. Appendix~\ref{app:next} carries this calculation
one order further and proves Theorem~\ref{thm:first-volume-correction}.

At the cubic endpoint, the string equation and KdV reduce the
coefficients to those of Painlev\'e~I. The coefficient asymptotic
of Garoufalidis--Its--Kapaev--Mari\~no
\cite[Section 2.3, (2.39)]{GIKM} determines their leading amplitude.
Theorem~\ref{thm:transport} then gives the WP amplitude.
Corollary~\ref{cor:constant} follows from the normalization in
Section~\ref{sec:constant}, first
with two markings and then with every fixed number of markings.
The case $n=0$ requires a separate use of the dilaton equation.
The leading asymptotic holds for every fixed $n\geq0$; the
first-correction theorem is proved here for $n\geq2$.

We use the sine-curve identification with WP volumes and Eynard's
intersection-number formula \cite{EO,Eyn}, together with symmetry
of the stable recursion differentials. These identify the recursion
with a shifted Witten--Kontsevich specialization, to which the string,
dilaton, and KdV equations apply. The factorial upper bound used in
the estimates follows by induction from the exact cubic recurrence.
The Painlev\'e asymptotic theorem is used only to determine the
leading amplitude.

\

\noindent{\bf Acknowledgements} This work was supported by the
National Key Research and Development Program of China (NKPs) [Grant
Number 2024YFA1013201], the Natural Science Foundation of Chongqing
(NSFCQ) [Grant Number CSTB2024NSCQ-LZX0040], the Special Project of
Chongqing Municipal Science and Technology Bureau [Grant Number
2025CCZ015]. 

\

\noindent{\bf Disclosure of AI assistance.}
ChatGPT assisted with the proofs at an early 
stage, particularly in numerical experiments and proofreading.

\section{Recursion and shifted descendants}\label{sec:exact}
We use the bidifferential and recursion kernel
\begin{equation}\label{eq:kernel}
 B(z_1,z_2)=\frac{\dd z_1\dd z_2}{(z_1-z_2)^2},\qquad
 K_t(z_0,z)=-\frac{\dd z_0}
 {2y_t(z)(z_0^2-z^2)\dd z}.
\end{equation}
The stable differentials define volume polynomials by the Laplace transform
\begin{equation}\label{eq:laplace}
 \omega^{(t)}_{g,n}(\mathbf z)
 =\int_{\mathbb R_+^n}V^{(t)}_{g,n}(\mathbf L)
 \prod_{i=1}^n L_i e^{-L_i z_i}\,\dd L_i\,\dd z_i.
\end{equation}
When the parameter is understood, we write
$V_{g,n}=V^{(t)}_{g,n}(\bzero)$ and put
\begin{equation}\label{eq:J}
 J_{g,n}(\mathbf d)=
 \prod_{i=1}^n4^{d_i}(2d_i+1)!
 [\mathbf L^{2\mathbf d}]V^{(t)}_{g,n}(\mathbf L).
\end{equation}
Thus $J_{g,n}(\bzero)=V_{g,n}$. We assign the value zero to an
expression with a negative index, an inadmissible dimension, or an
unstable factor.

Write $q_t(s)=\sum_{L\geq0}b_L(t)s^L$ and let
$D=3g-3+n-|\mathbf d|$. Taking Laurent coefficients in the recursion
with kernel \eqref{eq:kernel} gives, except for $(0,3)$ and $(1,1)$,
\begin{align}
 J_{g,n}(\mathbf d)
 ={}&4\sum_{j=2}^n(2d_j+1)\sum_{L=0}^{D}b_L
 J_{g,n-1}(L+d_1+d_j-1,\mathbf d_{\widehat1,\widehat j})
 \label{eq:rec}\\
 &+8\sum_{L=0}^{D}b_L\sum_{u+v=L+d_1-2}
 \left[J_{g-1,n+1}(u,v,\mathbf d_{\widehat1})\right.\notag\\
 &\hspace{38mm}\left.
 +\sum_{\substack{g_1+g_2=g\\I\sqcup J=\{2,\ldots,n\}}}^{\st}
 J_{g_1,|I|+1}(u,\mathbf d_I)
 J_{g_2,|J|+1}(v,\mathbf d_J)\right].\notag
\end{align}
The sum over stable splittings is ordered. The initial values are
\begin{equation}\label{eq:initial}
 J_{0,3}(0,0,0)=1,\qquad
 J_{1,1}(0)=\frac{b_1}2,\qquad J_{1,1}(1)=\frac12.
\end{equation}
At the WP endpoint, $b_L=2a_L$ in the notation of Mirzakhani--Zograf;
thus $4b_L$ and $8b_L$ become $8a_L$ and $16a_L$.

We verify \eqref{eq:rec} directly. The term of the stable
differential indexed by $\mathbf d$ is
\[
 4^{-|\mathbf d|}J_{g,n}(\mathbf d)
 \prod_i\frac{\dd z_i}{z_i^{2d_i+2}}.
\]
The kernel has the expansion
\[
 K_t=-\frac12\sum_{L,m\geq0}b_L4^Lz^{2L+2m-1}
 \frac{\dd z_0}{z_0^{2m+2}\dd z}.
\]
In a term with two node markings, the residue condition is
$u+v=L+d_1-2$. The sign from $\dd(-z)$ cancels the sign of the
kernel, and the normalization factor is $4^{L+d_1-u-v}/2=8$.
In a merging term, the relevant Bergman coefficient is
$(2d_j+1)z^{2d_j}/z_j^{2d_j+2}$. The two placements of the unstable
Bergman factor cancel the factor $1/2$. The residue condition is
$u=L+d_1+d_j-1$, and the resulting factor is $4(2d_j+1)$.
The dimension excess of a merging or nonseparating term is $D-L$;
for a stable splitting, the two excesses have the same sum.
Hence every term has the common range $0\leq L\leq D$.

Define the shifted descendant times $s_k(t)$ by
\begin{equation}\label{eq:times}
 y_t(z)=z-\sum_{k\geq2}s_k(t)\frac{z^{2k-1}}{(2k-1)!!},
 \qquad p_k=4^k(2k+1)!!.
\end{equation}
The one-branch-point formula gives
\begin{equation}\label{eq:shifted}
 F_{g,n}(\mathbf d):=
 \left\langle\prod_{i=1}^n\tau_{d_i}
 \exp\left(\sum_{k\geq2}s_k(t)\tau_k\right)\right\rangle_g,
 \qquad
 J_{g,n}(\mathbf d)=\prod_i p_{d_i}\,F_{g,n}(\mathbf d),
\end{equation}
where $p_0=1$. For fixed $g,n,\mathbf d$, this is a finite polynomial
in the times: each additional $\tau_k$ uses $k-1\geq1$ units of
dimension.

To fix the normalization, consider the formal Gaussian transform
\begin{equation}\label{eq:gaussian}
 \sqrt{\frac2\pi}\,u^{3/2}
 \int_0^\infty z y_t(z)e^{-uz^2/2}\,\dd z
 =1-\sum_{k\geq2}s_k(t)u^{-(k-1)}.
\end{equation}
The integral is interpreted by termwise Gaussian integration of the
series at the branch point. The corresponding kappa-time series is
$-\log(1-\sum_{k\geq2}s_k v^{k-1})$. The forgetful-map identity
expresses these kappa insertions as the additional descendant markings
in \eqref{eq:shifted}. The leading Gaussian coefficient is one, and
$B$ has no regular part in the coordinate $z$. There is therefore no
Euler-characteristic factor or change of basis at the external markings.

We use \cite[Theorem 3.2, equations (3.10)--(3.12)]{Eyn}, whose
conventions are $x=z^2$ and
$K_E=\dd z_0/[4y(z)(z_0^2-z^2)\dd z]$.
Since our kernel is $-2K_E$, homogeneity of the recursion gives
$\omega_{g,n}=(-2)^{2g-2+n}W^{(g)}_n$ in that notation. This cancels
the factor $(-2)^{2-2g-n}$ in the cited formula.
The coefficient denoted there by $t_{2j+3}$ is
$[z^{2j+1}]y(z)$, so the Schur transform is
$1-\sum_{k\geq2}s_kv^{k-1}$. These identifications give the signs and
powers of two in \eqref{eq:shifted}. A fixed correlator involves only
finitely many coefficients of $y$; hence the theorem for polynomial
curves applies to our analytic family by Taylor truncation.

For example, the first nonlinear time gives
\begin{equation}\label{eq:smallcheck}
 s_2=12b_1,\quad s_3=240(b_2-b_1^2),\quad
 V_{0,4}=12b_1,\quad V_{1,2}=2b_1^2+10b_2.
\end{equation}
All quantities are evaluated at the same $t$, which we suppress.
Indeed, \eqref{eq:shifted} gives $V_{1,2}=s_2^2/12+s_3/24$, since
$\langle\tau_0^2\tau_2^2\rangle_1=1/6$ and
$\langle\tau_0^2\tau_3\rangle_1=1/24$.

\section{Positivity and exact identities}
Unless otherwise stated, constants below may depend on the fixed
numbers of markings, but not on $t\in[0,1]$ or on sufficiently large
$g$.

\begin{lemma}[Bounds for the kernel coefficients]\label{lem:kernel}
Set $b_{-1}=0$, $\Delta_L=b_L-b_{L-1}$, $B_t=2^t$, and
$M_t=3tB_t/4$. Then
\begin{gather}
 1=b_0\leq b_1\leq\cdots\leq B_t\leq2,
 \qquad \Delta_L\geq0,\label{eq:positive}\\
 \sum_{L\geq0}\Delta_L=B_t,\qquad
 \sum_{L\geq0}L\Delta_L=M_t,\qquad
 \sup_t\sum_{L\geq0}(1+L)^m\Delta_L<\infty
 \quad(m\geq0).\label{eq:moments-delta}
\end{gather}
The tails decay exponentially, uniformly in $t$.
\end{lemma}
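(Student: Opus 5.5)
The plan is to read everything off the generating function of the increments. Since $q_t(s)=H(s)^t/(1-s)$, multiplying by $1-s$ gives
\[
\sum_{L\geq0}\Delta_L s^L=(1-s)q_t(s)=H(s)^t ,
\]
so $\Delta_L=[s^L]H(s)^t$ and $b_L=\sum_{l\leq L}\Delta_l$. All claims in \eqref{eq:positive} and \eqref{eq:moments-delta} then become statements about the power series $H(s)^t$ and its values at $s=1$.

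\emph{Positivity.} Using the analytic branch fixed in the introduction, I will write
\[
H(s)^t=\exp\Bigl(t\sum_{j\geq2}-\log(1-s/j^2)\Bigr)
=\exp\Bigl(t\sum_{m\geq1}\frac{\zeta(2m)-1}{m}s^m\Bigr).
\]
The inner series has nonnegative coefficients, and $t\geq0$. The exponential of a series with nonnegative coefficients has nonnegative coefficients. Hence $\Delta_L\geq0$, and $\Delta_0=1$ gives $b_0=1$. Monotonicity of $b_L$ follows at once.

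\emph{Exact sums.} The inner series converges for $|s|<4$, so $H(s)^t$ is analytic there and in particular at $s=1$. By the telescoping product $\prod_{j\geq2}(1-1/j^2)=\lim_N\frac{N+1}{2N}=\tfrac12$, we get $H(1)=2$. Hence
\[
\sum_L\Delta_L=H(1)^t=2^t=B_t .
\]
It follows that $b_L\leq B_t\leq 2$ for $0\leq t\leq1$. For the first moment, differentiate:
\[
\sum_L L\Delta_L=\frac{\dd}{\dd s}H(s)^t\Big|_{s=1}
=t\,2^t\sum_{j\geq2}\frac{1}{j^2-1}=\tfrac34\,t\,2^t=M_t ,
\]
using the partial fractions $\frac1{j^2-1}=\tfrac12\bigl(\frac1{j-1}-\frac1{j+1}\bigr)$, whose sum telescopes to $\tfrac12(1+\tfrac12)=\tfrac34$.

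\emph{Uniform tails.} Fix $R\in(1,4)$, for instance $R=2$. Because the coefficients of the exponent are nonnegative, on $|s|=R$ we have
\[
|H(s)^t|\leq H(R)^t\leq \max(1,H(R)),
\]
uniformly in $t\in[0,1]$. Cauchy's estimate then gives $0\leq\Delta_L\leq \max(1,H(R))R^{-L}$. This yields uniform exponential decay of the tails and
\[
\sup_t\sum_L(1+L)^m\Delta_L\leq C_m\sum_L(1+L)^mR^{-L}<\infty .
\]
The tails of $b_L$ toward $B_t$ are $B_t-b_L=\sum_{l>L}\Delta_l$, so they also decay exponentially, uniformly in $t$.

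The only step needing any care is justifying that the branch of $H^t$ fixed in the introduction coincides with the exponential of $t$ times the log series. This holds because both are analytic on $|s|<4$ and equal $1$ at the origin. The rest is routine.
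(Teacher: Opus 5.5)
Your proposal is correct and follows the paper's proof essentially step for step: you identify $\sum_L\Delta_Ls^L=H(s)^t$, get positivity by writing it as the exponential of $t\sum_m\frac{\zeta(2m)-1}{m}s^m$, obtain $B_t$ and $M_t$ from the telescoping values $H(1)=2$ and $\sum_{j\geq2}(j^2-1)^{-1}=3/4$, and derive the uniform moment and tail bounds from a Cauchy estimate on a circle of radius in $(1,4)$. The only cosmetic difference is that you bound $|H(s)^t|$ on $|s|=R$ by $H(R)$ using nonnegativity of the exponent's coefficients, whereas the paper uses $\exp\bigl(\max_{|s|=\rho}|\log H(s)|\bigr)$; both give the same uniform-in-$t$ bound.
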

\begin{proof}
Write $G_t(s)=H(s)^t=\sum_{L\geq0}a_L(t)s^L$. Since
$q_t(s)=G_t(s)/(1-s)$, multiplication by
$(1-s)^{-1}=\sum_{r\geq0}s^r$ gives, coefficient by coefficient,
\[
 b_L(t)=\sum_{r=0}^{L}a_r(t),\qquad
 \Delta_L(t)=b_L(t)-b_{L-1}(t)=a_L(t)=[s^L]G_t(s).
\]
This also holds for $L=0$, since $a_0(t)=G_t(0)=1$ and $b_{-1}=0$.

For $|s|<4$, expand each logarithm in the product for $H$:
\begin{align*}
 \log H(s)
 &=-\sum_{j\geq2}\log\left(1-\frac{s}{j^2}\right)
   =\sum_{j\geq2}\sum_{m\geq1}\frac{s^m}{m j^{2m}}\\
 &=\sum_{m\geq1}c_m s^m,\qquad
 c_m:=\frac{\zeta(2m)-1}{m}
      =\frac1m\sum_{j\geq2}\frac1{j^{2m}}>0.
\end{align*}
The series is absolutely and locally uniformly convergent in $|s|<4$.
It follows that
\[
 G_t(s)=\exp\left(t\sum_{m\geq1}c_m s^m\right).
\]
More explicitly, for $L\geq1$,
\[
 a_L(t)=
 \sum_{\substack{k_1,\ldots,k_L\geq0\\
                  k_1+2k_2+\cdots+Lk_L=L}}
 \prod_{m=1}^{L}\frac{(t c_m)^{k_m}}{k_m!},
 \qquad a_0(t)=1.
\]
For fixed $L$ this is a finite sum of monomials in $t$ with nonnegative
coefficients. Hence $a_L(t)=\Delta_L(t)\geq0$ and $\Delta_L(t)$ is
nondecreasing in $t$. Therefore $b_L(t)=\sum_{r=0}^{L}\Delta_r(t)$ is
nondecreasing in $L$, and $b_0(t)=1$.

The product at $s=1$ can be evaluated by finite telescoping:
\[
 \prod_{j=2}^{N}\left(1-\frac1{j^2}\right)^{-1}
 =\prod_{j=2}^{N}\frac{j}{j-1}\prod_{j=2}^{N}\frac{j}{j+1}
 =N\frac{2}{N+1}=\frac{2N}{N+1}.
\]
Thus $H(1)=2$. Logarithmic differentiation, justified by local uniform
convergence of the product and its logarithmic derivative near $s=1$,
gives
\[
 \frac{H'(1)}{H(1)}
 =\sum_{j\geq2}\frac1{j^2-1}.
\]
The partial fractions telescope:
\[
 \sum_{j=2}^{N}\frac1{j^2-1}
 =\frac12\sum_{j=2}^{N}
       \left(\frac1{j-1}-\frac1{j+1}\right)
 =\frac12\left(\frac32-\frac1N-\frac1{N+1}\right)
 \longrightarrow\frac34.
\]
Consequently $G_t(1)=2^t=B_t$ and
$G_t'(1)=tG_t(1)H'(1)/H(1)=3tB_t/4=M_t$. Because $G_t$ is analytic
in a disk of radius $4$, its Taylor series and differentiated series
converge absolutely at $s=1$. Termwise evaluation therefore yields
\[
 \sum_{L\geq0}\Delta_L(t)=G_t(1)=B_t,\qquad
 \sum_{L\geq0}L\Delta_L(t)=G_t'(1)=M_t.
\]
The first identity and nonnegativity give $b_L(t)\leq B_t$; also
$1\leq B_t=2^t\leq2$. This proves the inequalities and the two moment
identities in the lemma.

It remains to prove the uniform moment and tail bounds. Fix
$1<\rho<4$. The power series for $\log H(s)$ converges uniformly on
$|s|\leq\rho$, so
\[
 C_\rho:=\exp\left(\max_{|s|=\rho}|\log H(s)|\right)<\infty
 \quad\hbox{and}\quad
 \sup_{0\leq t\leq1}\max_{|s|=\rho}|G_t(s)|\leq C_\rho.
\]
Cauchy's coefficient estimate then gives
\[
 0\leq\Delta_L(t)\leq C_\rho\rho^{-L}
 \qquad(L\geq0,\ 0\leq t\leq1).
\]
For every fixed $m\geq0$, comparison with the convergent geometric series
implies
\[
 \sup_{0\leq t\leq1}\sum_{L\geq0}(1+L)^m\Delta_L(t)
 \leq C_\rho\sum_{L\geq0}(1+L)^m\rho^{-L}<\infty.
\]
More quantitatively, choose $1<\rho_0<\rho<4$ and set
\[
 K_{m,\rho_0,\rho}
 :=\sup_{L\geq0}(1+L)^m\left(\frac{\rho_0}{\rho}\right)^L<\infty.
\]
Then, for every $N\geq0$,
\[
 \sup_{0\leq t\leq1}\sum_{L\geq N}(1+L)^m\Delta_L(t)
 \leq\frac{C_\rho K_{m,\rho_0,\rho}}{1-\rho_0^{-1}}\rho_0^{-N}.
\]
This is the asserted exponential decay, uniformly in $t$.
\end{proof}

For stable $(g,n)$ with $n\geq1$, induction in \eqref{eq:rec} shows
that the coefficients are polynomials in the $b_L$ with nonnegative
coefficients. Consequently,
\begin{equation}\label{eq:endpoint-upper}
 0<V^{(0)}_{g,n}\leq V^{(t)}_{g,n}\leq V^{\WP}_{g,n},
 \qquad n\geq1,\quad 2g-2+n>0.
\end{equation}
This positivity does not require the shifted times $s_k(t)$ to have
the same sign. The restriction $n\geq1$ is necessary here, since the
recursion requires a distinguished marking. Equation
\eqref{eq:endpoint-upper} makes no assertion about $V^{(t)}_{g,0}$.
Its endpoint identification and positivity for large genus will follow
from Proposition~\ref{prop:all-n}.

\begin{lemma}[Comparison with the cubic curve]\label{lem:cubic-comparison}
For every stable $(g,n)$ with $n\geq1$ and all admissible degrees,
\begin{equation}\label{eq:cubic-comparison}
 J^{(0)}_{g,n}(\mathbf d)\leq J^{(t)}_{g,n}(\mathbf d)
 \leq B_t^{2g-2+n}J^{(0)}_{g,n}(\mathbf d).
\end{equation}
For each fixed $k\geq1$, there are constants $C,P$ such that
\begin{equation}\label{eq:cubic-upper}
 V^{(t)}_{h,k}\leq
 C\left(\frac{B_t}{A_0}\right)^{2h}4^h(h!)^2h^P,
 \qquad h\geq1,\quad 0\leq t\leq1.
\end{equation}
\end{lemma}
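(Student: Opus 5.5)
The plan is to prove \eqref{eq:cubic-comparison} by induction on $2g-2+n$ in the recursion \eqref{eq:rec}, and then deduce \eqref{eq:cubic-upper} from an explicit factorial bound at the cubic endpoint.

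\emph{Lower bound.} Every coefficient $J^{(t)}_{g,n}(\mathbf d)$ is a polynomial in the $b_L(t)$ with nonnegative coefficients. At $t=0$ we have $q_0(s)=1/(1-s)$, so $b_L(0)=1$ for all $L$. By Lemma~\ref{lem:kernel}, $b_L(t)\geq1$. Monotonicity of the polynomial in each $b_L$ then gives $J^{(0)}\leq J^{(t)}$ directly.

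\emph{Upper bound.} For the upper bound we use $b_L(t)\leq B_t=B_t\,b_L(0)$.

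1. Check the base cases: $J_{0,3}=1\leq B_t$, and $J_{1,1}(0)=b_1/2\leq B_t/2$, $J_{1,1}(1)=1/2$.
2. Take an induction step. In \eqref{eq:rec}, each term carries one factor $b_L$, which costs at most one factor of $B_t$.
3. Count the remaining factors. A merging term uses $J_{g,n-1}$, with $2g-2+(n-1)$ factors by induction. A nonseparating term uses $J_{g-1,n+1}$, with $2g-3+n$ factors. A stable splitting has factor count $(2g_1-1+|I|)+(2g_2-1+|J|)=2g-3+n$.
4. In every case the total is at most $B_t^{2g-2+n}$ times the corresponding $t=0$ term, using $B_t\geq1$ to absorb any deficit. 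Summing the terms gives \eqref{eq:cubic-comparison}.

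\emph{Reduction of \eqref{eq:cubic-upper}.} Since $B_t^{2h-2+k}\leq 2^{k}B_t^{2h}$, it suffices to prove
\[
V^{(0)}_{h,k}\leq C A_0^{-2h}4^h(h!)^2h^P .
\]
Here $A_0=\int_0^{1/2}2z(z-4z^3)\,\dd z=1/16$. The target is therefore $V^{(0)}_{h,k}\lesssim 4^{5h}(h!)^2h^P$, which is the expected Painlev\'e~I growth $\Gamma(2h+k-5/2)A_0^{-2h-k}$ up to polynomial factors.

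\emph{Cubic estimate.} We prove it by strong induction on $2h-2+k$, using the exact cubic recurrence with all $b_L=1$. The inductive hypothesis is a uniform bound over all degree vectors,
\[
J^{(0)}_{h,k}(\mathbf d)\leq C\,c^{2h+k}\,\Gamma(2h+k-5/2+\epsilon)\cdot\phi(\mathbf d),
\]
with $c=1/A_0=16$ and a suitable weight $\phi$. At the cubic endpoint, $J$ is (up to $p_d$ factors) $F$ with $s_2=12$ and $s_k=0$ for $k\geq3$. We can therefore instead bound the shifted correlators
\[
\langle\prod\tau_{d_i}\exp(12\tau_2)\rangle_h ,
\]
whose generating function is a Witten--Kontsevich specialization.

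1. Use the string equation and KdV, as the paper indicates, to reduce to a Painlev\'e~I type quadratic recurrence for the $k=1$ or $k=2$ sequence, of the form $u_h\approx \alpha h^2u_{h-1}+\sum u_{h_1}u_{h_2}$.
2. Handle the convolution sum: with the ansatz $u_h\leq C\,c^{2h}\Gamma(2h-1/2)$, it is dominated by its endpoint terms and contributes a relative $O(h^{-2})$ after absorbing.
3. Pass from $k=1,2$ to fixed $k$ by repeated use of the dilaton equation and the merging terms of \eqref{eq:rec}. Each extra marking costs a factor $O(h)$, which is absorbed into $h^P$.

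\emph{Main obstacle.} The hardest step is closing this factorial induction with the sharp exponential rate $4^h/A_0^{2h}$ rather than some larger constant. A crude Catalan-type bound would give the wrong base, so the recurrence must be used in its exact quadratic form. I would first try the Painlev\'e~I recurrence for $u''$, with the ratio test $u_h/u_{h-1}\sim\alpha h^2$, where $\alpha$ is read off from the linear term. The identification $\alpha\cdot(\text{normalization})=4/A_0^2$ is the essential computation. The polynomial loss $h^P$ gives enough room to absorb the nonlinear terms.
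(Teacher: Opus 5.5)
Your argument for \eqref{eq:cubic-comparison} is correct and is the paper's argument in inductive form. The paper makes every coefficient exactly homogeneous of degree $2g-2+n$ by treating $b_0$ as a formal variable, whereas you absorb the base cases with $B_t\geq1$. Your reduction of \eqref{eq:cubic-upper} to $t=0$ via $B_t^{2h-2+k}\leq2^kB_t^{2h}$ also matches the paper.

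The gap is the cubic factorial bound itself, for three reasons.

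\emph{The target is wrong.} $A_0=\int_0^{1/2}(2z^2-8z^4)\,\dd z=\frac1{12}-\frac1{20}=\frac1{30}$, not $\frac1{16}$. So what must be shown is $V^{(0)}_{h,k}\leq C\,3600^h(h!)^2h^P$. The target you wrote, $1024^h(h!)^2h^P$, is false: already the linear term of \eqref{eq:c-rec} forces $V^{(0)}_{h,2}=c_h\geq c_1\prod_{j=2}^h144(5j-6)(5j-4)$, which grows like $3600^h(h!)^2h^{-2}$.

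\emph{The induction is not carried out, and your ansatz does not close.} With the exact recurrence, the hypothesis $c_{h-1}\leq C\,900^{h-1}\Gamma(2h-5/2)$ bounds the linear term alone by $C\,900^h\Gamma(2h-1/2)$ times
\[
 \frac{144(5h-6)(5h-4)}{900\,(2h-3/2)(2h-5/2)}
 =\frac{h^2-2h+24/25}{h^2-2h+15/16}>1 .
\]
The convolution only adds to this, so a fixed constant cannot be propagated. Slack has to be built into the normalization; the remark that $h^P$ gives room does not supply it.

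\emph{The scalar recurrence is never derived.} It requires the scaling form $u_h=c_hX^{(1-5h)/2}$, $X=1-24x$, of the solution of $u=x+6u^2+\hbar^2u''$. Under this form every term of the order-$\hbar^{2h}$ equation is a multiple of $X^{(2-5h)/2}$.

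The paper supplies the slack by normalizing with $3600^h(h!)^2$, which exceeds the true growth by a factor of order $h^2$. For $a_h=c_h/[3600^h(h!)^2]$ the recurrence becomes
\[
 a_h=\Bigl(1-\frac2h+\frac{24}{25h^2}\Bigr)a_{h-1}+6\sum_{j=1}^{h-1}\frac{a_ja_{h-j}}{\binom hj^2}.
\]
Since $\binom hj\geq h$, the hypothesis $a_j\leq\frac1{300}$ propagates with total multiplier $1+\frac{47-99h}{50h^2}\leq1$. The fixed-$k$ values then follow from differentiating $u_h$ in $x$, which gives the exact formulas \eqref{eq:cubic-fixedn}.

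Your dilaton route to fixed $k$ can also be made to work. You must explain, though, why it bounds $V^{(0)}_{h,k+1}$ from above, since the dilaton equation naturally runs the other way. At $t=0$ the string equation gives $J^{(0)}_{h,k+1}(1,0^k)=V^{(0)}_{h,k+1}$. Then $w_2=-\frac1{20}<0$ together with Lemma~\ref{lem:mono} gives $(2h-2+k)V^{(0)}_{h,k}\geq(w_1+w_2)V^{(0)}_{h,k+1}=A_0V^{(0)}_{h,k+1}$. For $k=1$, use \eqref{eq:lower-marking}. Your preliminary weighted induction over all degree vectors in \eqref{eq:rec} is unnecessary.
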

\begin{proof}
Regard $b_0$ temporarily as a formal variable in the recursion, with
initial values $J_{0,3}(\bzero)=b_0$, $J_{1,1}(1)=b_0/2$, and
$J_{1,1}(0)=b_1/2$. Each coefficient is a homogeneous polynomial in
the $b_L$, with nonnegative coefficients and ordinary degree
$2g-2+n$. Indeed, the degree in a merging or nonseparating term is
one less, and the two degrees in a product have the same sum.
Multiplication by the kernel coefficient raises the degree by one,
which proves the assertion by induction.

For the cubic kernel, $b_L=1$ for every $L$, whereas
$1\leq b_L(t)\leq B_t$. Evaluating these homogeneous polynomials
gives \eqref{eq:cubic-comparison}. 

The string and KdV equations give the following identities for the
cubic curve, as proved in Section~\ref{sec:anchor}:
\begin{align}
 V^{(0)}_{h,1}&=\frac{c_h}{12(5h-3)},\label{eq:cubic-fixedn}\\
 V^{(0)}_{h,k}&=c_h12^{k-2}
 \prod_{j=0}^{k-3}(5h-1+2j),\qquad k\geq2,\notag
\end{align}
where the empty product for $k=2$ equals one. The recurrence
\eqref{eq:c-rec} implies
\begin{equation}\label{eq:cubic-elementary}
 c_h\leq\frac1{300}\,3600^h(h!)^2
 =\frac1{300}\,A_0^{-2h}4^h(h!)^2.
\end{equation}
To prove this, put $a_h=c_h/[3600^h(h!)^2]$. Then $a_1=1/300$,
and the recurrence becomes
\[
 a_h=\left(1-\frac2h+\frac{24}{25h^2}\right)a_{h-1}
 +6\sum_{j=1}^{h-1}\frac{a_ja_{h-j}}{\binom hj^2}.
\]
Since $\binom hj\geq h$ for $1\leq j\leq h-1$, the induction
hypothesis $a_j\leq1/300$ bounds the multiplier of $1/300$ by
\[
 1-\frac2h+\frac{24}{25h^2}
 +\frac1{50}\frac{h-1}{h^2}
 =1+\frac{-99h+47}{50h^2}\leq1.
\]
This proves \eqref{eq:cubic-elementary}. The factors multiplying $c_h$
in \eqref{eq:cubic-fixedn} are polynomial in $h$, and $B_t^{k-2}$
is uniformly bounded. Thus \eqref{eq:cubic-upper} follows. This
argument uses only the exact cubic recurrence; it uses neither the
transport theorem nor the Painlev\'e asymptotic formula.
\end{proof}

\begin{lemma}[Monotonicity and adjacent differences]\label{lem:mono}
For every stable $(g,n)$ with $n\geq1$ and all nonnegative degrees,
\begin{equation}\label{eq:monotone}
 0\leq J_{g,n}(\mathbf d+\mathbf e_i)
 \leq J_{g,n}(\mathbf d)\leq V_{g,n}.
\end{equation}
Except in the two initial cases, the adjacent difference is the
right-hand side of \eqref{eq:rec} with $b_L$ replaced by $\Delta_L$,
and with the original degrees and upper limit $D$.
\end{lemma}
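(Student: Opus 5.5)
We first establish the difference formula, since it gives both inequalities at once. Fix a stable $(g,n)$ with $n\ge1$ and a marking $i$. Because $V_{g,n}(\mathbf L)$ is symmetric in the variables, we may relabel so that $i=1$ is the distinguished marking of the recursion \eqref{eq:rec}. The symmetry of the stable recursion differentials, noted after \eqref{eq:shifted}, makes this legitimate.

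Compare the right-hand side of \eqref{eq:rec} for $J_{g,n}(\mathbf d)$ with that for $J_{g,n}(\mathbf d+\mathbf e_1)$.
\begin{itemize}
\item[] Raising $d_1$ by one shifts each inner index by one. In a merging term the index is $L+d_1+d_j-1$; in a node term the constraint becomes $u+v=L+d_1-2$.
\item[] The upper limit drops from $D$ to $D-1$.
\end{itemize}
Reindex by $L'=L+1$ in the shifted expression. The term with coefficient $b_L$ for $\mathbf d+\mathbf e_1$ then matches the term with coefficient $b_{L'-1}$ for $\mathbf d$, for $1\le L'\le D$. With the convention $b_{-1}=0$, the $L'=0$ term is included. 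Subtracting, we get
\[
J_{g,n}(\mathbf d)-J_{g,n}(\mathbf d+\mathbf e_1)
\]
equal to the right-hand side of \eqref{eq:rec} with $b_L$ replaced by $\Delta_L=b_L-b_{L-1}$, with the original degrees and upper limit $D$.

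The point needing care is that the inner ranges really coincide. The constraint $L\le D$ is automatic: terms with $L>D$ vanish by the dimension convention, since the excess $D-L$ of each factor would be negative. Hence both sums may be taken over $L\ge0$ and the reindexing is exact. For $d_1+1$ exceeding the dimension, $J_{g,n}(\mathbf d+\mathbf e_1)=0$ and the identity still holds formally. The two initial cases $(0,3)$ and $(1,1)$ are checked directly from \eqref{eq:initial}. For $(0,3)$ we have $J_{0,3}(\mathbf e_i)=0\le1$. For $(1,1)$ we have $J_{1,1}(1)=1/2\le b_1/2=J_{1,1}(0)$, using $b_1\ge1$ from \eqref{eq:positive}.

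Next we show $J_{g,n}(\mathbf d)\ge0$ and the monotonicity in \eqref{eq:monotone}. We argue by induction on $2g-2+n$, proving simultaneously that all $J$ are nonnegative and that all adjacent differences are nonnegative. Nonnegativity of $J$ follows from \eqref{eq:rec}, because $b_L\ge0$ and the lower-order $J$ are nonnegative by induction. The difference formula expresses $J(\mathbf d)-J(\mathbf d+\mathbf e_1)$ as a sum of terms of the form $\Delta_L$ times lower-order $J$'s, with coefficients $4(2d_j+1)$ and $8$. Since $\Delta_L\ge0$ by Lemma~\ref{lem:kernel} and the lower $J$'s are nonnegative by induction, this difference is nonnegative.

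Finally, iterating $J(\mathbf d+\mathbf e_i)\le J(\mathbf d)$ one coordinate at a time down to $\bzero$ gives $J_{g,n}(\mathbf d)\le J_{g,n}(\bzero)=V_{g,n}$.

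The main obstacle is the bookkeeping of the reindexing, in particular the boundary terms at $L=0$ and $L=D$. These are handled by the convention $b_{-1}=0$ together with the vanishing of inadmissible-dimension terms.
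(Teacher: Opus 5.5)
Your proof is correct and follows the paper's argument. Your reindexing $L'=L+1$ is the paper's device of fixing $r=L+d_1$, so that only the coefficient changes, from $b_{r-d_1}$ to $b_{r-d_1-1}$. The rest also matches: $b_{-1}=0$ supplies the lowest term, positivity of $\Delta_L$ and of the children gives monotonicity, the initial cases use $b_1\geq1$, and iteration gives the bound by $V_{g,n}$.

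One harmless slip: the indices in your first bullet are the unshifted ones. After raising $d_1$ they are $L+d_1+d_j$ and $u+v=L+d_1-1$.
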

\begin{proof}
By symmetry, it is enough to take $i=1$. Put $r=L+d_1$.
Increasing $d_1$ by one decreases $D$ by one, so the upper limit
$r\leq D+d_1$ is unchanged. At fixed $r$, all indices on the
right-hand side remain unchanged, while the coefficient changes
from $b_{r-d_1}$ to $b_{r-d_1-1}$. The difference is
$\Delta_{r-d_1}$; the convention $b_{-1}=0$ includes the lowest term.
Positivity proves the adjacent inequality. The initial values
\eqref{eq:initial} satisfy it as well, since $b_1\geq1$. Iteration
gives the bound by $V_{g,n}$.
\end{proof}

Applying the dilaton equation to \eqref{eq:shifted} gives
\begin{equation}\label{eq:dilaton}
 (2g-2+n)V_{g,n}
 =\sum_{k\geq1}w_k(t)J_{g,n+1}(k,0^n),\qquad
 w_1=\frac1{12},\quad w_k=-\frac{s_k}{p_k}\ (k\geq2).
\end{equation}
The number of additional markings in the dilaton equation contributes
$\sum s_k\partial_{s_k}$; moving this term to the right gives the
stated weights. For any fixed $1/2<R<1$, Cauchy's estimates applied
to \eqref{eq:times} give
\begin{equation}\label{eq:weights}
 \sup_t\sum_{k\geq1}(1+k)^m|w_k(t)|<\infty,
 \qquad
 \sup_t\sum_{k\geq2}(1+k)^m\frac{|\dot s_k(t)|}{p_k}<\infty.
\end{equation}
Indeed, the coefficient bounds contain the geometric factor
$(4R^2)^{-k}/(2k+1)$. Both $y_t$ and $\dot y_t$ are uniformly
analytic on this circle.

We shall also use the following integrated form of the first KdV
equation:
\begin{align}
 \left\langle\tau_1\tau_0\prod_{i=1}^m\tau_{d_i}\right\rangle_g
 ={}&\frac1{12}\left\langle\tau_0^4\prod_{i=1}^m\tau_{d_i}
 \right\rangle_{g-1}\label{eq:kdv-coefficients}\\
 &+\frac12\sum_{\substack{h+h'=g\\I\sqcup J=\{1,\ldots,m\}}}
 \left\langle\tau_0^2\prod_{i\in I}\tau_{d_i}\right\rangle_h
 \left\langle\tau_0^2\prod_{i\in J}\tau_{d_i}\right\rangle_{h'}.
 \notag
\end{align}
Here and below an unstable intersection number is zero. This identity
follows from the formula in \cite[Section 5, proof of Proposition 5.1]{LX}
by the dilaton equation. In particular, it fixes the constant of
integration in KdV.

To specialize \eqref{eq:kdv-coefficients}, take $n-2$ of the insertions
to be $\tau_0$, and sum the remaining insertions with weights
$\prod_k s_k^{m_k}/m_k!$. In the product term, if $a_k$ of the $m_k$
insertions of degree $k$ lie on the first factor, the number of choices
is $\binom{m_k}{a_k}$. Thus
\[
 \frac{s_k^{m_k}}{m_k!}\binom{m_k}{a_k}
 =\frac{s_k^{a_k}}{a_k!}\frac{s_k^{m_k-a_k}}{(m_k-a_k)!},
\]
and the two factors acquire their own shifted exponentials.
Choosing $r$ of the distinguished $n-2$ zero insertions for the first
factor gives $\binom{n-2}{r}$. Finally $p_1=12$ and $p_0=1$, so
\begin{equation}\label{eq:kdv}
 J_{g,n}(1,0^{n-1})=V_{g-1,n+2}
 +6\sum_{h=0}^g\sum_{r=0}^{n-2}\binom{n-2}{r}
 V_{h,r+2}V_{g-h,n-r},\qquad n\geq2.
\end{equation}
The splitting is ordered, as in \eqref{eq:kdv-coefficients}.
For a genus-zero shifted coefficient with $r=1$ or $2$ zero
insertions, a nonzero term would require
\[
 \sum_{j=1}^m(k_j-1)=r-3<0,\qquad k_j\geq2.
\]
Hence $V_{0,1}=V_{0,2}=0$. Stable genus-zero factors remain in
\eqref{eq:kdv}; in particular, $V_{0,3}=1$.

Taking absolute values in \eqref{eq:dilaton} and applying
\eqref{eq:monotone} and \eqref{eq:weights}, we obtain a constant
$C_*$ such that
\begin{equation}\label{eq:lower-marking}
 (2g-2+n)V_{g,n}\leq C_*V_{g,n+1}.
\end{equation}
This is a lower bound for the increase in volume upon adding a marking.
Here $n\geq1$; no positivity or growth bound for $V_{g,0}$ has been
used. Positivity in \eqref{eq:kdv} also gives
\begin{equation}\label{eq:genus-inequality}
 V_{g-1,n+2}\leq V_{g,n},\qquad n\geq2.
\end{equation}

\section{Coarse estimates}
We first establish the estimates needed below without using an
asymptotic formula for the genus ratio.

\begin{lemma}[Coarse separating estimate]\label{lem:coarse-split}
For fixed $n\geq2$ and $k,b\geq1$ with $k+b=n+1$,
\begin{equation}\label{eq:coarse-split}
 \sum_h^{\st}V_{h,k}V_{g-h,b}=O_n(V_{g,n}/g).
\end{equation}
The estimate is uniform for $t\in[0,1]$.
\end{lemma}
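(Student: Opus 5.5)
The plan is to compare the separating sum with the product term in the KdV identity \eqref{eq:kdv}. Every term there is nonnegative and the whole right-hand side equals $J_{g,n}(1,0^{n-1})\leq V_{g,n}$ by \eqref{eq:monotone}. Hence, for $n\geq2$ and $2\leq k',b'\leq n$ with $k'+b'=n+2$,
\[
 \sum_h V_{h,k'}V_{g-h,b'}\leq\frac16\binom{n-2}{k'-2}^{-1}V_{g,n}\leq V_{g,n},
\]
uniformly in $t$. In \eqref{eq:coarse-split} the total number of markings is $n+1$, one fewer. So I will add one marking to one factor using \eqref{eq:lower-marking}, which in the form $V_{h,k}\leq C_*V_{h,k+1}/(2h-2+k)$ gains a factor of order $1/h$. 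The point is to choose which factor receives the marking so that the gain is of order $1/g$ and the resulting pair lies in the range covered by KdV. Both factors then have at least two markings, since $n+2-k'=b'\geq2$ forces $k'\leq n$.

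\textbf{Case $k,b\geq2$.} For $h\geq g/2$ I add the marking to the first factor, and for $h<g/2$ to the second. The gain is $C_*/(2h-2+k)$ or $C_*/(2(g-h)-2+b)$, which is $O(1/g)$ in either case. The pairs $(k+1,b)$ and $(k,b+1)$ both lie in the KdV range $2\leq k',b'\leq n$. Summing and applying the displayed KdV bound gives $O_n(V_{g,n}/g)$.

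\textbf{Case $k=1$, $b=n$ (the case $b=1$ is symmetric).} The marking must go to the single-marked factor, so the gain $C_*/(2h-1)$ is only useful when $h$ is large. For $h\geq\varepsilon g$ this still gives $O(1/(\varepsilon g))$ times the KdV sum. The main obstacle is the small-genus range $1\leq h<\varepsilon g$; there I would use a direct decay estimate instead of KdV.
\begin{itemize}
 \item Combining \eqref{eq:genus-inequality} with \eqref{eq:lower-marking} applied twice gives $V_{g'-1,n}\leq C'V_{g',n}/g'^2$.
 \item Iterating this $h$ times yields
 \[
  V_{g-h,n}\leq C'^h\,V_{g,n}\prod_{j=0}^{h-1}(g-j)^{-2}.
 \]
 \item From \eqref{eq:cubic-upper}, $V_{h,1}\leq C\,C_1^h(h!)^2h^P$, with $C_1=4B_t^2/A_0^2$ bounded uniformly in $t$.
 \item The product of the two bounds is at most $V_{g,n}\,(C'C_1)^h h^P\binom{g}{h}^{-2}$.
\end{itemize}
For $1\leq h\leq\varepsilon g$ one has $\binom gh\geq(g/h)^h\geq\varepsilon^{-h}$. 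Choosing $\varepsilon$ small relative to $C'C_1$, the terms with $h\geq2$ are therefore bounded by a geometric series of ratio below $1$ times $g^{-2}$. The term $h=1$ is directly $O(V_{g,n}/g^2)$. Hence the small-$h$ contribution is $O(V_{g,n}/g^2)$.

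All constants above, namely $C_*$, $C'$, $C$, $P$, $B_t$ and the KdV bound, are uniform in $t\in[0,1]$ by Lemmas~\ref{lem:kernel} and~\ref{lem:cubic-comparison} and by \eqref{eq:weights}. This gives the claimed uniformity in $t$.
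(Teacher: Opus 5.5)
Your proof is correct and follows the paper's. The $k=1$ case is handled as there: the KdV product terms with the gain $C_*/(2h-1)$ for $h\geq\varepsilon g$, and for small $h$ the iterated decay $V_{g'-1,n}\leq C'V_{g',n}/g'^2$ against the cubic factorial bound. For $k,b\geq2$ you add a marking to the larger-genus factor and use KdV at $n$ markings, whereas the paper bounds the sum by the KdV identity at $n-1$ markings, i.e.\ by $O_n(V_{g,n-1})$, and then applies the lower marking estimate once; this is an equivalent variant. One step needs stating: the factor $g^{-2}$ in the small-$h$ range comes from keeping a factor $(h/g)^2$ in $\binom gh^{-2}\leq(h/g)^{2h}\leq(h/g)^2\varepsilon^{2h-2}$, not from $\binom gh\geq\varepsilon^{-h}$ alone, and only $O(g^{-1})$ is needed there in any case.
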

\begin{proof}
Suppose first that $k,b\geq2$. Then $n\geq3$, and the product occurs
in \eqref{eq:kdv} with $n-1$ markings and a fixed positive binomial
coefficient. Its sum is therefore $O_n(V_{g,n-1})$, which is
$O_n(V_{g,n}/g)$ by \eqref{eq:lower-marking}.

By symmetry, the remaining case is
$\sum_{h\geq1}V_{h,1}V_{g-h,n}$. Choose a sufficiently small
constant $\epsilon>0$. For $h\geq\epsilon g$, the lower marking
estimate gives
\[
 V_{h,1}\leq\frac{C_*}{2h-1}V_{h,2}
 \leq\frac{C_\epsilon}{g}V_{h,2}.
\]
The product terms in \eqref{eq:kdv} with $n$ markings then bound
this part of the sum by $O_n(V_{g,n}/g)$. The same argument includes
the term in which the other component has genus zero, when it is stable.

For $1\leq h<\epsilon g$, two applications of the lower marking
estimate and \eqref{eq:genus-inequality} give
\[
 V_{j-1,n}\leq
 \frac{C_*^2}{(2j+n-4)(2j+n-3)}V_{j-1,n+2}
 \leq\frac{C_n}{j^2}V_{j,n}.
\]
Iteration yields
$V_{g-h,n}/V_{g,n}\leq(C'/g^2)^h$ in this range.
By the cubic upper bound \eqref{eq:cubic-upper}, with the constant
enlarged to include finitely many genera,
$V_{h,1}\leq C_0(C_1h)^{2h}$ for $h\geq1$. Thus this part of the sum,
divided by $V_{g,n}$, is at most
\[
 C_0\sum_{1\leq h<\epsilon g}(Kh/g)^{2h}.
\]
Choose $\epsilon$ so that $K\epsilon\leq1/2$. Each summand is at most
$K^2h^2g^{-2}4^{-(h-1)}$, and the sum is $O(g^{-2})$.
All constants may be chosen independently of $t$.
\end{proof}

\begin{proposition}[Descendant deficit]\label{prop:deficit}
For fixed $n\geq2$, all $g\geq1$, all nonnegative degrees, and all
$t\in[0,1]$,
\begin{equation}\label{eq:deficit}
 0\leq1-\frac{J_{g,n}(\mathbf d)}{V_{g,n}}
 \leq\min\left\{1,\frac{C_n|\mathbf d|^2}{g}\right\}.
\end{equation}
Here $J_{g,n}(\mathbf d)$ is zero beyond the dimension, as stipulated
above. At $\mathbf d=\bzero$, the deficit is zero.
\end{proposition}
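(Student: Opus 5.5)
The lower bound $0\le 1-J_{g,n}(\mathbf d)/V_{g,n}$ and the bound by $1$ come directly from the monotonicity Lemma~\ref{lem:mono}: $0\le J_{g,n}(\mathbf d)\le V_{g,n}$. The deficit vanishes at $\mathbf d=\bzero$ because $J_{g,n}(\bzero)=V_{g,n}$. So the content is the bound $C_n|\mathbf d|^2/g$. I would prove it by telescoping along adjacent steps. Write $\mathbf d$ as a path $\bzero=\mathbf d^{(0)},\mathbf d^{(1)},\ldots,\mathbf d^{(|\mathbf d|)}=\mathbf d$, each step adding some $\mathbf e_i$. Then
\[
 V_{g,n}-J_{g,n}(\mathbf d)=\sum_{m=0}^{|\mathbf d|-1}
 \bigl(J_{g,n}(\mathbf d^{(m)})-J_{g,n}(\mathbf d^{(m)}+\mathbf e_{i_m})\bigr).
\]
It therefore suffices to show that each adjacent difference at a point of total degree $m$ is at most $C_n(1+m)V_{g,n}/g$. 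Summing over $m<|\mathbf d|$ then gives $C_n|\mathbf d|^2/g$.

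For the adjacent difference I use the second part of Lemma~\ref{lem:mono}. By symmetry I may take $i=1$. The difference is the right-hand side of \eqref{eq:rec} with $b_L$ replaced by $\Delta_L$. Every $J$ factor there is bounded, via \eqref{eq:monotone}, by the corresponding volume.

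\textbf{Merging terms.} These are bounded by $4\sum_j(2d_j+1)\sum_L\Delta_L\,V_{g,n-1}$. Since $\sum\Delta_L\le 2$ by Lemma~\ref{lem:kernel}, this is $O_n(1+m)V_{g,n-1}$, which is $O_n((1+m)V_{g,n}/g)$ by \eqref{eq:lower-marking}.

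\textbf{Nonseparating and separating terms.} The inner sum over $u+v=L+d_1-2$ has $L+d_1-1$ terms. Bounding each $J$ by its volume gives a factor $\sum_L(1+L+d_1)\Delta_L$, which is $O(1+m)$ by the moment bounds \eqref{eq:moments-delta}. The nonseparating contribution is then at most $O(1+m)V_{g-1,n+1}$. By \eqref{eq:lower-marking} and \eqref{eq:genus-inequality}, $V_{g-1,n+1}\le C V_{g-1,n+2}/g\le CV_{g,n}/g$. The separating contribution is a finite sum, over the subset splittings $I\sqcup J$, of $\sum_h^{\st}V_{h,|I|+1}V_{g-h,|J|+1}$ with $(|I|+1)+(|J|+1)=n+1$. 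This is exactly the situation of Lemma~\ref{lem:coarse-split}, which gives $O_n(V_{g,n}/g)$.

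All constants come from Lemma~\ref{lem:kernel}, \eqref{eq:lower-marking} and Lemma~\ref{lem:coarse-split}, so they are uniform in $t$. Small $g$ is absorbed into $C_n$ using the trivial bound $1$. The initial cases $(0,3)$ and $(1,1)$ do not arise for $n\ge2$, $g\ge1$. In the intermediate steps the degrees may exceed the dimension, but then the $J$ vanish and the inequalities persist trivially.

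The step I expect to be the main obstacle is making the bound linear in $m$ rather than worse. The naive count of $(u,v)$ pairs brings in $L+d_1$, and one must check that only $d_1$ (not the full $|\mathbf d|$ squared) enters. This is handled by the uniform first-moment bound on $\Delta_L$. One must also check that no factor with an unstable or $n=1$ component escapes Lemma~\ref{lem:coarse-split}. For $n=2$ the separating terms have $(k,b)=(1,2)$ or $(2,1)$, which is precisely the $V_{h,1}$ case treated in that lemma.
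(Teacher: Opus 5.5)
Your proposal is correct and follows essentially the same route as the paper. You bound every descendant in the adjacent-difference recursion of Lemma~\ref{lem:mono} by its cusp volume. You control the merge and nonseparating children with the lower marking estimate and the genus inequality, the separating terms with Lemma~\ref{lem:coarse-split}, and the node-pair counts with the kernel moments. You then telescope the resulting $O_n\bigl((1+|\mathbf d|)V_{g,n}/g\bigr)$ adjacent bound along a path from $\bzero$ to $\mathbf d$, exactly as the paper does; the only cosmetic difference is that you treat degrees beyond the dimension directly inside the telescoping, whereas the paper covers them by enlarging $C_n$.
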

\begin{proof}
The two cusp ratios occurring in the defect recursion satisfy
\begin{equation}\label{eq:coarse-child}
 V_{g,n-1}=O_n(V_{g,n}/g),\qquad
 V_{g-1,n+1}=O_n(V_{g,n}/g).
\end{equation}
The first follows from the lower marking estimate, including the case
of a parent with one marking. For the second, add a marking at genus
$g-1$ and apply \eqref{eq:genus-inequality}.

In the defect recursion, bound each descendant on the right-hand side
by its cusp volume. The merge multiplicities have sum
$O_n(1+|\mathbf d|)$, and the two-node index sum has
$O(1+d_1+L)$ pairs. Equations \eqref{eq:moments-delta} and
\eqref{eq:coarse-child}, together with
Lemma~\ref{lem:coarse-split}, give
\[
 0\leq J_{g,n}(\mathbf d)-J_{g,n}(\mathbf d+\mathbf e_i)
 \leq\frac{C_n(1+|\mathbf d|)}g V_{g,n}.
\]
Sum this inequality along a path from $\bzero$ to $\mathbf d$ to
obtain the quadratic bound. Monotonicity gives the bound by one.
Beyond the dimension, the quadratic bound is at least one for large
$g$; enlarging $C_n$ covers the remaining cases. The argument uses only
upper bounds by cusp volumes, not asymptotic estimates for the
descendants on the right-hand side.
\end{proof}

\section{Marking ratios and separating estimates}
The first positive root of $y_t$ is $a=1/2$ for every $t\in[0,1]$.
Euler's sine product gives $y_1(z)=\sin(2\pi z)/(2\pi)$, and hence
\begin{equation}\label{eq:actions}
 A_1=A_{\WP}=\frac1{4\pi^2},\qquad A_0=\frac1{30},\qquad
 A_{\WP}\leq A_t\leq A_0.
\end{equation}
The inequalities follow from $H(4z^2)\geq1$ for $0\leq z\leq a$.
Termwise integration in \eqref{eq:times} gives
\begin{equation}\label{eq:sumw}
 \sum_{k\geq1}w_k=A_t.
\end{equation}

\begin{lemma}[Marking ratio]\label{lem:marking}
For every fixed parent marking count $n\geq1$,
\begin{equation}\label{eq:marking}
 \frac{V_{g,n+1}}{V_{g,n}}
 =\frac{2g-2+n}{A_t}+O_n(1)
\end{equation}
uniformly for $t\in[0,1]$.
\end{lemma}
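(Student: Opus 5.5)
The plan is to start from the dilaton equation \eqref{eq:dilaton} applied with $n$ markings, which expresses $(2g-2+n)V_{g,n}$ as $\sum_{k\geq1}w_k(t)J_{g,n+1}(k,0^n)$. Write each descendant as $J_{g,n+1}(k,0^n)=V_{g,n+1}\bigl(1-\delta_k\bigr)$, where $\delta_k$ is the descendant deficit. Then
\[
 (2g-2+n)V_{g,n}=V_{g,n+1}\Bigl(\sum_{k\geq1}w_k-\sum_{k\geq1}w_k\delta_k\Bigr)
 =V_{g,n+1}\Bigl(A_t-\sum_{k\geq1}w_k\delta_k\Bigr),
\]
using \eqref{eq:sumw}.

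Next we bound the error sum. Proposition~\ref{prop:deficit} applies with $n+1\geq2$ markings and gives $0\leq\delta_k\leq C_n k^2/g$. The weight bound \eqref{eq:weights} with $m=2$ then yields
\[
 \Bigl|\sum_k w_k\delta_k\Bigr|\leq \frac{C_n}{g}\sup_t\sum_k k^2|w_k|=O_n(g^{-1}),
\]
uniformly in $t$. Since $A_t\geq A_{\WP}=1/(4\pi^2)>0$ by \eqref{eq:actions}, the bracket equals $A_t\bigl(1+O_n(g^{-1})\bigr)$ and is bounded away from zero for large $g$. Dividing,
\[
 \frac{V_{g,n+1}}{V_{g,n}}=\frac{2g-2+n}{A_t\bigl(1+O_n(g^{-1})\bigr)}
 =\frac{2g-2+n}{A_t}+O_n(1).
\]
Positivity of $V_{g,n}$ for $n\geq1$ from \eqref{eq:endpoint-upper} justifies the division. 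For the finitely many small $g$ excluded by "sufficiently large", the ratio is a continuous positive function of $t$ on $[0,1]$, so it is bounded, and enlarging the constant covers them.

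The main point needing care is that the deficit estimate is needed for the child count $n+1$. Proposition~\ref{prop:deficit} requires $n+1\geq2$, which holds because the parent count satisfies $n\geq1$. The case where $k$ exceeds the dimension of $J_{g,n+1}$ is harmless: there $J=0$, so $\delta_k=1$, which is still bounded by $C_nk^2/g$ after enlarging $C_n$, as the proposition stipulates. Everything else is uniform in $t$, because every constant invoked (in \eqref{eq:weights}, \eqref{eq:deficit}, and \eqref{eq:actions}) is independent of $t$.
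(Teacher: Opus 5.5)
Your proposal is correct and follows essentially the same route as the paper. You substitute the deficit bound \eqref{eq:deficit} at $n+1\geq2$ markings into the dilaton identity \eqref{eq:dilaton}, use $\sum_k w_k=A_t$ and the uniform bound on $\sum_k k^2|w_k|$ to get $(2g-2+n)V_{g,n}/V_{g,n+1}=A_t+O_n(g^{-1})$, and invert using $A_t\geq A_{\WP}>0$. Your extra remarks (positivity of $V_{g,n}$ for $n\geq1$, degrees beyond the dimension, finitely many small $g$) are harmless details the paper leaves implicit.
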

\begin{proof}
Apply \eqref{eq:deficit} with $n+1\geq2$ markings in
\eqref{eq:dilaton}. Since $\sum_{k\geq1}k^2|w_k|$ is uniformly bounded,
\[
 (2g-2+n)\frac{V_{g,n}}{V_{g,n+1}}=A_t+O_n(g^{-1}).
\]
The lower bound $A_t\geq A_{\WP}>0$ permits uniform inversion and
proves \eqref{eq:marking}.
\end{proof}

\begin{lemma}[Factorial convolution]\label{lem:binomial}
For fixed $P\geq0$ and $0\leq r_*<2$,
\begin{equation}\label{eq:binomial}
 \sum_{h=1}^{\lfloor g/2\rfloor}
 \frac{h^P r_*^{2h}}{\binom gh^2}=O_{P,r_*}(g^{-2}).
\end{equation}
\end{lemma}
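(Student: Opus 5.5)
We bound the summands directly, separating the first two terms from the rest. The $h=1$ term is $r_*^2/g^2$ and the $h=2$ term is $2^P r_*^4\cdot 4/(g(g-1))^2=O(g^{-4})$. Both are $O(g^{-2})$. The real issue is to show that the terms with $h\geq2$ sum to $O(g^{-2})$, uniformly in $g$, even though the number of terms grows with $g$.

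The key tool is a lower bound for the binomial coefficient valid for $1\leq h\leq g/2$:
\[
 \binom gh=\prod_{i=0}^{h-1}\frac{g-i}{h-i}\geq\left(\frac gh\right)^h.
\]
This holds because $(g-i)/(h-i)\geq g/h$ for $0\leq i<h\leq g$. Consequently the summand is at most $h^P\,(r_*h/g)^{2h}$.

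We split the range at $h\leq\epsilon g$ and $\epsilon g<h\leq g/2$, with $\epsilon$ small.

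In the small range, write $(r_*h/g)^{2h}=(r_*h/g)^{2}\,(r_*h/g)^{2h-2}$. Since $r_*h/g\leq r_*\epsilon\leq1/2$, the summand is at most
\[
 h^{P+2}\,r_*^2\,g^{-2}\,2^{-(2h-2)}.
\]
Summing over $h$ gives $O_{P,r_*}(g^{-2})$, because $\sum_h h^{P+2}4^{-h}$ converges.

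The middle range $\epsilon g<h\leq g/2$ is where the hypothesis $r_*<2$ is needed, and it is the main obstacle. The crude bound above yields $h^P(r_*/2)^{2h}$ at $h=g/2$, which decays geometrically in $g$, but for intermediate $h$ the product $r_*h/g$ could exceed one. So the crude bound must be replaced there.

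We use the entropy bound
\[
 \binom gh\geq\frac{e^{gH(h/g)}}{g+1},\qquad H(x)=-x\log x-(1-x)\log(1-x).
\]
With $x=h/g\in(\epsilon,1/2]$, the summand is at most
\[
 (g+1)^2g^P\exp\bigl(g\,[\,2x\log r_*-2H(x)\,]\bigr).
\]
The function $\phi(x)=H(x)-x\log r_*$ is concave and vanishes at $x=0$. Its value at $x=1/2$ is $\log2-\tfrac12\log r_*>0$, since $r_*<2<4$. By concavity, $\phi(x)\geq c_\epsilon>0$ on $[\epsilon,1/2]$.

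Each middle-range summand is therefore $O(g^{P+2}e^{-2c_\epsilon g})$. There are at most $g$ such terms, so their total is exponentially small and in particular $O(g^{-2})$.

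Combining the two ranges proves \eqref{eq:binomial}. The constants depend only on $P$, $r_*$ and the fixed choice $\epsilon=1/(2r_*+2)$.
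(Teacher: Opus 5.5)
Your proof is correct, but it is organized differently from the paper's. The paper splits the sum at a fixed cutoff $H>P+3$ and at $\sqrt g$. The finitely many terms with $h\le H$ are $O(g^{-2})$ by their explicit form. On $H<h\le\sqrt g$ it uses $\binom gh\ge(g/h)^h$ to get $h^P(r_*^2/g)^h$, hence $O(g^{-H-1})$. On $h>\sqrt g$ it uses $\binom gh\ge 2^h$ to get $h^P(r_*/2)^{2h}$, which is exponentially small in $\sqrt g$.

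You split at $\epsilon g$ instead. Factoring out $(r_*h/g)^2$ handles the whole small range in one step, which is the same device the paper uses in the proof of Lemma~\ref{lem:coarse-split}. This avoids separate bookkeeping for fixed $h$. Factoring out $(r_*h/g)^4$ for $h\ge2$ would equally give the $O(g^{-4})$ refinement the paper later needs in Appendix~\ref{app:next}.

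One remark in your middle-range paragraph is wrong, though harmless. For $h\le g/2$ one always has $r_*h/g\le r_*/2<1$, so $r_*h/g$ cannot exceed one there. Your crude bound therefore already gives $h^P(r_*/2)^{2h}\le g^P(r_*/2)^{2\epsilon g}$ on that range, and the entropy estimate is not needed. This crude bound is exactly where the paper uses $r_*<2$.

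What your entropy argument buys is generality. It only uses $\phi(1/2)=\log2-\tfrac12\log r_*>0$, i.e.\ $r_*<4$. That threshold is sharp, since $\binom{g}{g/2}^2$ is of size $4^g$ up to polynomial factors. The paper's bound $\binom gh\ge2^h$ loses a square root at $h=g/2$, which is why it needs $r_*<2$.
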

\begin{proof}
Choose an integer $H>P+3$. The fixed-$h$ factorial expressions show
that the terms with $h\leq H$ have sum $O(g^{-2})$.
For $H<h\leq\sqrt g$, the inequality
$\binom gh\geq(g/h)^h$ gives
\[
 \frac{h^P r_*^{2h}}{\binom gh^2}
 \leq h^P(r_*h/g)^{2h}
 \leq h^P(r_*^2/g)^h.
\]
For $q=r_*^2/g\leq1/2$,
$\sum_{h>H}h^Pq^h\leq C_{H,P}q^{H+1}$, so this part of the sum is
$O(g^{-H-1})$. For $\sqrt g<h\leq g/2$, use
$\binom gh\geq2^h$. The remaining sum is bounded by
$\sum_{h>\sqrt g}h^P(r_*/2)^{2h}$, which is exponentially small in
$\sqrt g$.
\end{proof}

\begin{proposition}[Refined separating estimates]\label{prop:refined}
Uniformly for $t\in[0,1]$, and for fixed $n\geq2$,
\begin{align}
 \sum_h^{\st}V_{h,k}V_{g-h,b}
 &=O_n(V_{g,n}/g^2),& k+b&=n+1,\quad k,b\geq1,
 \label{eq:split-fine}\\
 \sum_h^{\st}V_{h,k}V_{g-h,b}
 &=O_n(V_{g,n}/g),& k+b&=n+2,\quad k,b\geq2.
 \label{eq:kdv-split-fine}
\end{align}
\end{proposition}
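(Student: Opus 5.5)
We reduce both sums to ratio bounds obtained from the marking ratio, Lemma~\ref{lem:marking}, and then apply the factorial convolution Lemma~\ref{lem:binomial}. By symmetry of the ordered sum, we treat only the range $h\leq g/2$ (genus $h$ on the $k$-side), allowing both assignments of $(k,b)$. Stable genus-zero terms $h=0$ occur only when $k\geq3$. They are handled directly: each is $V_{0,k}V_{g,b}$ with $b\leq n-2$ in \eqref{eq:split-fine} or $b\leq n-1$ in \eqref{eq:kdv-split-fine}. Since Lemma~\ref{lem:marking} gives $V_{g,b}/V_{g,n}=O_n(g^{-(n-b)})$, these terms are $O_n(V_{g,n}/g^2)$ and $O_n(V_{g,n}/g)$ respectively.

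\textbf{Step 1 (uniform ratio for the large factor).} For $1\leq h\leq g/2$, we bound $V_{g-h,b}/V_{g,n}$. Iterating Lemma~\ref{lem:marking} in $n$, and combining with \eqref{eq:kdv} in the form $V_{g-1,m+2}\leq V_{g,m}$ together with the marking ratio, we get
\[
 V_{j-1,m}\leq \frac{C}{(2j)^2}A_t^{2}V_{j,m}\,(1+O(1/j)).
\]
Iterating from $j=g-h+1$ to $g$ gives
\[
 \frac{V_{g-h,m}}{V_{g,m}}\leq C\,\frac{A_t^{2h}\,\Gamma(2g-2h+m)}{\Gamma(2g+m)}\,h^{P'}
\]
for $h\leq g/2$. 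The polynomial factor must come from controlling the product $\prod(1+O(1/j))$ over $j\in[g/2,g]$. Sharper constants are needed here than the crude $C_n/j^2$ used in Lemma~\ref{lem:coarse-split}, and the marking ratio supplies exactly the leading constant $1/A_t$ per marking. The marking changes from $b$ to $n$ then contribute a factor $\asymp (A_t/g)^{n-b}$.

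\textbf{Step 2 (small factor).} For the factor $V_{h,k}$ we use the cubic upper bound \eqref{eq:cubic-upper}:
\[
 V_{h,k}\leq C(B_t/A_0)^{2h}4^h(h!)^2h^P.
\]
Multiplying by Step 1 and using $\Gamma(2g-2h+m)/\Gamma(2g+m)\approx \bigl[(2h)!\binom{2g}{2h}\bigr]^{-1}$, with $(h!)^24^h/(2h)!\asymp h^{1/2}$, each term is at most
\[
 C h^{P''}\Bigl(\frac{B_tA_t}{A_0}\Bigr)^{2h}\binom{g}{h}^{-2}\,g^{-(n-b)}
\]
up to a factor $2^{O(h)}$ from $\binom{2g}{2h}$ versus $\binom gh^2$. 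Since $\binom{2g}{2h}\geq\binom gh^2$, there is no loss. Now $B_tA_t/A_0\leq 2A_t/A_0\leq 2$, and on the WP side $A_t<A_0$ strictly for $t>0$. The ratio is exactly $1$ at $t=0$. Thus we may take $r_*=1<2$ in Lemma~\ref{lem:binomial}, which gives $\sum_{h\geq1}=O(g^{-2})\cdot g^{-(n-b)}$.

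\textbf{Step 3 (bookkeeping).} In \eqref{eq:split-fine} we have $k+b=n+1$. Hence $b\leq n$, and the total bound is $O(V_{g,n}g^{-2-(n-b)})\subseteq O(V_{g,n}/g^2)$. In \eqref{eq:kdv-split-fine} we have $k,b\geq2$ and $b\leq n$. When $b=n$, so that $k=2$, the sum is already $O(g^{-2})$, which is better than needed. For the terms with $b<n$, Step 1 gives an extra factor $g^{-1}$. The claim $O(V_{g,n}/g)$ is therefore comfortably met, and stated weakly because the $h=0$ terms enter there.

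\textbf{Main obstacle.} The delicate point is Step 1: obtaining the genus-ratio bound with the exact exponential base $A_t^{2}$ and only a polynomial loss in $h$, uniformly in $t$. The coarse estimate $C_n/j^2$ would give $C^h$, which is fatal once $C\cdot(B_tA_t/A_0)^2\geq4$. To get this bound, I would use \eqref{eq:genus-inequality} to move down one genus at the cost of two markings, and then \eqref{eq:marking} with its explicit leading term $(2j+m)/A_t$ to restore them. The accumulated $O(1/j)$ errors over $h$ steps give $\exp(O(h/g))$, which is harmless for $h\leq g/2$.
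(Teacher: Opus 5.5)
Your route is the paper's. You iterate the marking ratio against the one-sided inequality \eqref{eq:genus-inequality}, bound the small component by the cubic comparison, and reduce to $\sum_h h^P r(t)^{2h}/\binom gh^2$ with $r(t)=B_tA_t/A_0$. You then apply Lemma~\ref{lem:binomial} and treat genus-zero components by the marking ratio. The Vandermonde step $\binom{2g}{2h}\ge\binom gh^2$ and the exponent count $g^{b-n}$ are fine.

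The gap is in the choice of $r_*$, the one quantitative input the estimate rests on. Taking $r_*=1$ is false. Indeed $r(1)=2\cdot\frac1{4\pi^2}\cdot30=15/\pi^2\approx1.52$. Moreover
\[
 B_tA_t=\int_0^{1/2}2z^2(1-4z^2)\left(\frac{2}{H(4z^2)}\right)^t\dd z,
\]
with $1\le H(s)<2$ for $0\le s<1$, so $r$ is strictly increasing and $r(t)>1$ for every $t>0$. Your chain $B_tA_t/A_0\le2A_t/A_0\le2$ proves only $r_*=2$. That is exactly the value Lemma~\ref{lem:binomial} excludes, since its proof controls the range $\sqrt g<h\le g/2$ by $\sum_h h^P(r_*/2)^{2h}$. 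Nor can this bound be sharpened factor by factor, because $2A_t/A_0\to2$ as $t\to0$. A uniform strict gap has to use that $B_t$ is near $1$ exactly where $A_t$ is near $A_0$.

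There are three ways to repair this step.
\begin{itemize}
\item The monotonicity above is the paper's argument. It gives $r(t)\le r(1)=15/\pi^2<2$, which closes the step.
\item Your pointwise facts, $r(0)=1$ and $r(t)<2$ for $t>0$, would also suffice. But they need continuity of $r$ on the compact interval $[0,1]$, which you did not invoke.
\item You can keep $r_*=2$ and strengthen the convolution lemma. For $1\le h\le g/2$,
\[
 \binom gh\ge\frac{(g/h)^h\,(g/(g-h))^{g-h}}{g+1}\ge\frac{4^h}{g+1},
\]
so the middle range is at most $(g+1)^2\sum_{h>\sqrt g}h^P(r_*/4)^{2h}$. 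This is negligible for any $r_*<4$.
\end{itemize}
As written, however, Step~2 applies the lemma with a wrong parameter.

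Two smaller points. First, in your one-step bound in Step~1, the constant in front of $A_t^2/(2j)^2$ must be exactly $1$. Any $C>1$ produces the fatal $C^h$ you describe at the end; the marking ratio does supply constant $1$. Second, \eqref{eq:genus-inequality} is available only for $m\ge2$. So the case $b=1$ of \eqref{eq:split-fine}, that is $k=n$, needs a preliminary passage from $V_{g-h,1}$ to $V_{g-h,2}$ via the lower marking estimate. This costs a harmless factor $O(g^{-1})$, as in the paper.
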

\begin{proof}
Fix a reference marking count $N\geq2$. Two applications of
Lemma~\ref{lem:marking} and the one-sided inequality
\eqref{eq:genus-inequality} give
\[
 \frac{V_{j-1,N}}{V_{j,N}}
 \leq\frac{A_t^2}{4j^2}\bigl(1+O_N(j^{-1})\bigr).
\]
For $1\leq h\leq g/2$, iteration produces a product of relative
errors bounded by $C(g/(g-h))^C$, and therefore by a constant.
It follows that
\begin{equation}\label{eq:genus-upper-iterate}
 \frac{V_{g-h,N}}{V_{g,N}}
 \leq C\frac{A_t^{2h}}{4^h}
 \left(\frac{(g-h)!}{g!}\right)^2.
\end{equation}
Only the one-sided genus inequality is used here.

For each fixed $k$, the homogeneous comparison with the cubic model
gives
\[
 V_{h,k}\leq
 C(B_t/A_0)^{2h}4^h(h!)^2h^P,\qquad h\geq1.
\]
Set $r(t)=B_tA_t/A_0$. Then
\begin{equation}\label{eq:new-action-gap}
 r(t)\leq r(1)=\frac{2A_1}{A_0}=\frac{15}{\pi^2}<2.
\end{equation}
Indeed,
\[
 B_tA_t=\int_0^{1/2}2z^2(1-4z^2)
 \left(\frac2{H(4z^2)}\right)^t\dd z
\]
is increasing in $t$, since $1\leq H(s)\leq H(1)=2$ for
$0\leq s\leq1$. Applying Lemma~\ref{lem:marking} to the fixed changes
in marking count of the component of larger genus, we obtain
\begin{equation}\label{eq:product-fine}
 \frac{V_{h,k}V_{g-h,b}}{V_{g,n}}
 \leq C h^P g^{b-n}\frac{r_*^{2h}}{\binom gh^2},
 \quad 1\leq h\leq g/2,\qquad
 r_*:=\frac{15}{\pi^2}<2.
\end{equation}
When $b=1$, first compare with $b=2$ by the lower marking estimate,
which contributes a factor $O(g^{-1})$ and gives the same formula.
For the smaller component, the cubic bound also applies when $k=1$.

Apply Lemma~\ref{lem:binomial}, and interchange the two components
to treat the other half of the genus sum. If $k+b=n+1$, then
$b-n=1-k\leq0$, and each half with both genera positive is
$O(g^{-2})$ after division by $V_{g,n}$. A stable component of genus
zero has $k\geq3$; the marking ratio bounds its contribution by
$O(g^{1-k})=O(g^{-2})$. This proves \eqref{eq:split-fine}.

If $k+b=n+2$ and $k,b\geq2$, the parts with both genera positive
are again $O(g^{-2})$. A stable component of genus zero with $k=3$
contributes $O(g^{2-k})=O(g^{-1})$, and those with $k>3$ contribute
less. This proves \eqref{eq:kdv-split-fine}.
\end{proof}

Equations \eqref{eq:kdv}, \eqref{eq:deficit}, and
\eqref{eq:kdv-split-fine} now imply
\begin{equation}\label{eq:genus-leading}
 \frac{V_{g-1,n+2}}{V_{g,n}}=1+O_n(g^{-1}),\qquad n\geq2.
\end{equation}
Thus the asymptotic genus ratio follows from the separating estimates;
their proof required only the one-sided inequality
\eqref{eq:genus-inequality}.

\section{The first descendant correction}
\begin{proposition}[First correction with a summable remainder]
\label{prop:first-correction}
For every fixed $n\geq3$, uniformly for $t\in[0,1]$ and integers
$k\geq1$,
\begin{equation}\label{eq:E-expansion}
 \frac{J_{g,n}(k,0^{n-1})}{V_{g,n}}
 =1-\frac{E_{n,k}(t)}g
 +O_n\left(\frac{(k+1)^4}{g^2}\right),
\end{equation}
where
\begin{equation}\label{eq:E}
 E_{n,k}=2A_t\left[B_t k^2+\bigl(B_t(n-4)+2M_t\bigr)k-(n-3)\right].
\end{equation}
At $k=0$, the ratio is exactly one; we set $E_{n,0}=0$.
\end{proposition}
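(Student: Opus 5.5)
The plan is to telescope in the first degree. Write
\[
 \frac{J_{g,n}(k,0^{n-1})}{V_{g,n}}
 =1-\sum_{j=0}^{k-1}\frac{J_{g,n}(j,0^{n-1})-J_{g,n}(j+1,0^{n-1})}{V_{g,n}},
\]
and evaluate each adjacent difference by Lemma~\ref{lem:mono}. That lemma expresses the difference as the right-hand side of \eqref{eq:rec} with $b_L$ replaced by $\Delta_L$, at degrees $(j,0^{n-1})$. It consists of three groups of terms.
\begin{itemize}
\item The merging terms are $4\sum_{i=2}^n\sum_L\Delta_L J_{g,n-1}(L+j-1,0^{n-2})$.
\item The nonseparating terms are $8\sum_L\Delta_L\sum_{u+v=L+j-2}J_{g-1,n+1}(u,v,0^{n-1})$.
\item The separating terms form the remaining group.
\end{itemize}
By \eqref{eq:split-fine}, after bounding descendants by cusp volumes using \eqref{eq:monotone}, the separating group is $O_n((j+1)V_{g,n}/g^2)$. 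The factor $(j+1)$ counts the pairs $(u,v)$, and the sum over $L$ is controlled by the moments \eqref{eq:moments-delta}.

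For the leading order, I replace each descendant by its cusp volume using Proposition~\ref{prop:deficit}. This needs $n-1\geq2$ markings in the merging terms, which is where the hypothesis $n\geq3$ enters. I then use two ratios. Lemma~\ref{lem:marking} gives $V_{g,n-1}/V_{g,n}=A_t/(2g)+O_n(g^{-2})$. Lemma~\ref{lem:marking} at genus $g-1$, combined with \eqref{eq:genus-leading}, gives $V_{g-1,n+1}/V_{g,n}=A_t/(2g)+O_n(g^{-2})$.

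The only delicate bookkeeping is the boundary clipping at $j=0$. There the $L=0$ merging term has negative index and vanishes, so the merging sum over $L$ is $B_t-1$. Likewise the nonseparating pair count $(L+j-1)_+$ clips the term $L=0$.
\begin{itemize}
\item For $j\geq1$ the difference divided by $V_{g,n}$ is $A_t[2(n-1)B_t+4M_t+4B_t(j-1)]/g$, using $\sum\Delta_L=B_t$ and $\sum L\Delta_L=M_t$.
\item For $j=0$ it is $A_t[2(n-1)(B_t-1)+4(M_t-B_t+1)]/g$.
\end{itemize}
Summing over $0\leq j\leq k-1$ gives
\[
 \frac{A_t}{g}\Bigl[2B_tk^2+2k\bigl(B_t(n-4)+2M_t\bigr)-2(n-3)\Bigr]=\frac{E_{n,k}}g,
\]
which is exactly \eqref{eq:E}. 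Let me record the check. The $j=0$ term plus $(k-1)(2(n-1)B_t+4M_t)+2B_t(k-1)(k-2)$ collapses to this expression.

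The main obstacle is the remainder, which must be summable in $L$ and polynomial in $k$ uniformly in $t$. In the $j$-th difference, the descendant deficits are $O_n((L+j)^2/g)$ relative to the cusp volumes. They multiply a leading term of size $(1+L+j)/g$, giving $O((1+L+j)^3/g^2)$. The relative errors in the marking and genus ratios contribute $O((1+L+j)/g^2)$. Summing against $\Delta_L$ by the uniform moment bounds \eqref{eq:moments-delta} gives $O_n((j+1)^3/g^2)$ per difference, uniformly in $t$. Summing over $j<k$ then yields the stated $O_n((k+1)^4/g^2)$.

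Degrees beyond the dimension need a separate remark, since Proposition~\ref{prop:deficit} treats those $J$ as zero. When $k\gtrsim\sqrt g$, the bound $(k+1)^4/g^2$ already exceeds a constant. Both the ratio, which lies in $[0,1]$, and $E_{n,k}/g=O(k^2/g)$ are then absorbed into the error, so the expansion holds trivially there. The case $k=0$ is exact by definition.
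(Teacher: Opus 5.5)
Your proposal is correct and follows the paper's proof essentially step for step. You telescope the adjacent differences from Lemma~\ref{lem:mono}, replace the merge and nonseparating children by cusp values via Proposition~\ref{prop:deficit} and the two child ratios $A_t/(2g)+O_n(g^{-2})$, bound the separating group by \eqref{eq:split-fine}, treat the clipping at the first degree $0$ (kernel sums $B_t-1$ and $M_t-B_t+1$), and dispose of $k>\sqrt g$ by monotonicity, exactly as the paper does. The one point you leave implicit is that the recursion only sums over $L\le D=3g-3+n-j$, so passing to the full moments $B_t$ and $M_t$ costs a tail that is exponentially small by Lemma~\ref{lem:kernel} for $j\le\sqrt g$; the paper states this explicitly, and it is equally absorbed by your deficit bound, since that bound covers degrees beyond the dimension.
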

\begin{proof}
The marking ratio and \eqref{eq:genus-leading} give
\begin{equation}\label{eq:child-leading}
 \frac{V_{g,n-1}}{V_{g,n}}=\frac{A_t}{2g}+O_n(g^{-2}),\qquad
 \frac{V_{g-1,n+1}}{V_{g,n}}=\frac{A_t}{2g}+O_n(g^{-2}).
\end{equation}
Consider the difference between first degrees $d$ and $d+1$, with all
other degrees zero. In the exact defect of Lemma~\ref{lem:mono},
replace the descendants in the merge and nonseparating terms by their
cusp values. Their total degrees are $O(d+L+1)$. By
Proposition~\ref{prop:deficit} and \eqref{eq:child-leading}, the error
is at most
\[
 \frac{C_n}{g^2}\sum_{L\geq0}\Delta_L(d+L+1)^3
 =O_n((d+1)^3/g^2).
\]
The power three includes the number of two-node index pairs. Both
descendants have at least two markings, since $n\geq3$.
The refined separating estimate contributes
$O_n((d+1)/g^2)$ after summing over the index pairs and the kernel
moments. For $d\leq\sqrt g$, the finite $L$ sum may be extended to
infinity with an exponentially small error, since
$D=3g-3+n-d$.

When $d=0$, a merge with a zero-degree partner requires $L\geq1$,
so the sum of its kernel coefficients is $B_t-1$. For the
nonseparating term, the sum weighted by the number of index pairs is
\[
 \sum_{L\geq0}(L-1)_+\Delta_L=M_t-B_t+1.
\]
Hence the leading term of the adjacent defect, multiplied by
$g/V_{g,n}$, is
\begin{equation}\label{eq:d0}
 2A_t(n-1)(B_t-1)+4A_t(M_t-B_t+1).
\end{equation}
When $d\geq1$, the corresponding sums are $B_t$ and
$M_t+(d-1)B_t$. The leading term is then
\begin{equation}\label{eq:dpositive}
 2A_t(n-1)B_t+4A_t\bigl(M_t+(d-1)B_t\bigr).
\end{equation}
Summing \eqref{eq:d0} and \eqref{eq:dpositive} over $0\leq d<k$
gives \eqref{eq:E}. The sum of the errors is
$O_n((k+1)^4/g^2)$ for $k\leq\sqrt g$.

For $k>\sqrt g$, monotonicity gives $0\leq J/V\leq1$, while
$E_{n,k}=O_n((k+1)^2)$ uniformly in $t$. Thus the absolute error in
\eqref{eq:E-expansion} is at most $C_n(1+k^2/g)$, which is bounded
by $C_n'(k+1)^4/g^2$ in this range. The same argument applies beyond
the dimension, where the descendant is zero.
\end{proof}

At $t=1$, the correction is
\[
 E_{n,k}(1)=\frac{k^2+(n-5/2)k-(n-3)/2}{\pi^2}.
\]
At $t=0$, it is
$E_{n,k}(0)=[k^2+(n-4)k-(n-3)]/15$.
In particular, $E_{n,1}(0)=0$, in agreement with the exact cubic
identity $J_{g,n}(1,0^{n-1})=V_{g,n}$.

\section{Variation of the normalized volumes}\label{sec:transport}
We first record the moment identities needed in the proof of
Theorem~\ref{thm:transport}. Put
$\beta_t=a y_t''(a)/y_t'(a)$. Equation~\eqref{eq:family-y} and
Lemma~\ref{lem:kernel} give
\begin{equation}\label{eq:beta}
 B_t\kappa_t=2,\qquad
 \beta_t=3(1-t)=3-4M_t/B_t.
\end{equation}
The dilaton weights satisfy
\begin{equation}\label{eq:S}
 S_0:=\sum_{k\geq1}w_k=A_t,\qquad
 S_1:=\sum_{k\geq1}kw_k=-A_t/2,\qquad
 S_2:=\sum_{k\geq1}k^2w_k=A_t/4-\kappa_t/16.
\end{equation}
To prove these identities, regard the upper endpoint of the action
integral as a variable and write
\[
 A(a)=\frac{2a^3}{3}
 -2\sum_{k\geq2}\frac{s_k a^{2k+1}}{(2k+1)!!}.
\]
At $a=1/2$, the terms of this series are $w_k$. If
$D_a=a\partial_a$, then $(D_a-1)/2$ multiplies the $k$th term by
$k$. At the zero of $y$,
$D_aA=2a^2y(a)=0$ and
$D_a^2A=2a^3y'(a)=-\kappa_t/4$.
Applying these identities once and twice proves \eqref{eq:S}.

\subsection{Proof by the moment identities}
\begin{proof}[Proof of Theorem~\ref{thm:transport}]
Fix $n\geq2$ and suppress $t$ from the notation. Put
\[
 A=A_t,\qquad \kappa=\kappa_t,\qquad
 B=B_t=2^t,\qquad M=M_t=\frac34tB,\qquad
 N=2g+n-2.
\]
Set
\[
 r_k=\frac{J_{g,n+1}(k,0^n)}{V_{g,n+1}},\qquad
 \chi_1=0,\qquad \chi_k=\frac{\dot s_k}{p_k}\quad(k\geq2).
\]
Thus $\chi_k=-\dot w_k$ for every $k\geq1$.
The dilaton equation and differentiation of the finite polynomial
\eqref{eq:shifted} give
\[
 N\frac{V_{g,n}}{V_{g,n+1}}=\sum_{k\geq1}w_kr_k,
 \qquad
 \frac{\dd}{\dd t}\log V_{g,n}
 =\frac{V_{g,n+1}}{V_{g,n}}\sum_{k\geq1}\chi_kr_k.
\]
It follows that
\begin{equation}\label{eq:moment-quotient}
 \frac{\dd}{\dd t}\log V_{g,n}
 =N\frac{\sum_{k\geq1}\chi_kr_k}
 {\sum_{k\geq1}w_kr_k}.
\end{equation}
Since $n+1\geq3$, Proposition~\ref{prop:first-correction} applies.
For $k\geq1$, it gives
\begin{equation}\label{eq:inserted-deficit}
 r_k=1-\frac{E_k}{g}
 +O_n\!\left(\frac{(k+1)^4}{g^2}\right),\qquad
 E_k=2A\left[Bk^2+\bigl(B(n-3)+2M\bigr)k-(n-2)\right].
\end{equation}
Here and below the errors are uniform in $t$.
By \eqref{eq:weights}, the fourth absolute moments of $w_k$ and
$\chi_k$ are uniformly bounded. We may therefore sum
\eqref{eq:inserted-deficit} against either weight, with remainder
$O_n(g^{-2})$.

Differentiating \eqref{eq:S} gives
\begin{equation}\label{eq:differentiated-moments}
 \sum_{k\geq1}\chi_k=-\dot A,\qquad
 \sum_{k\geq1}k\chi_k=\frac{\dot A}{2},\qquad
 \sum_{k\geq1}k^2\chi_k=-\frac{\dot A}{4}+\frac{\dot\kappa}{16}.
\end{equation}
The termwise differentiations follow from the same uniform bounds.
Let
\[
 T_n=\sum_{k\geq1}w_kE_k,\qquad
 Q_n=\sum_{k\geq1}\chi_kE_k,\qquad
 \mathcal D_n=\frac{B(2n-7)}4+M+n-2.
\]
Substitution of \eqref{eq:S} and \eqref{eq:differentiated-moments}
into these sums yields
\begin{equation}\label{eq:moment-TQ}
 T_n=-2A^2\mathcal D_n-\frac A4,\qquad
 Q_n=2A\mathcal D_n\dot A+\frac{AB}{8}\dot\kappa.
\end{equation}
Since $B\kappa=2$, we obtain
\begin{equation}\label{eq:moment-cancellation}
 Q_n+\frac{\dot A}{A}T_n
 =-\frac{\dot A}{4}+\frac{A\dot\kappa}{4\kappa}.
\end{equation}
Thus \eqref{eq:moment-quotient} becomes
\[
 \frac{\dd}{\dd t}\log V_{g,n}
 =N\frac{-\dot A-Q_n/g+O_n(g^{-2})}
 {A-T_n/g+O_n(g^{-2})}.
\]
The denominator is bounded away from zero for all sufficiently large
$g$, uniformly in $t$, since $A\geq(4\pi^2)^{-1}$.
Using $N/g=2+O_n(g^{-1})$ and \eqref{eq:moment-cancellation}, we find
\begin{align*}
 \frac{\dd}{\dd t}\log V_{g,n}
 &=-N\frac{\dot A}{A}
 -\frac2A\left(Q_n+\frac{\dot A}{A}T_n\right)+O_n(g^{-1})\\
 &=-\left(2g+n-\frac52\right)\frac{\dot A}{A}
 -\frac12\frac{\dot\kappa}{\kappa}+O_n(g^{-1}).
\end{align*}
In the original notation, this is
\begin{equation}\label{eq:transport-proof}
 \frac{\dd}{\dd t}\log V^{(t)}_{g,n}
 =-(2g+n-5/2)\frac{\dot A_t}{A_t}
 -\frac12\frac{\dot\kappa_t}{\kappa_t}+O_n(g^{-1}).
\end{equation}
By \eqref{eq:amplitude}, it follows that
\[
 \frac{\dd}{\dd t}\log\mathcal B_{g,n}(t)=O_n(g^{-1})
\]
uniformly on $[0,1]$. Integration between any $u,t\in[0,1]$ gives
$\log(\mathcal B_{g,n}(t)/\mathcal B_{g,n}(u))=O_n(g^{-1})$.
Exponentiation proves \eqref{eq:transport-ratio}.
\end{proof}

\subsection{Variation with respect to a descendant time}
The same estimates give the derivative with respect to each time
$s_k$. For a parent with $n\geq2$ markings, define
\begin{equation}\label{eq:Rdef}
 R_{g,n,k}:=\partial_{s_k}\log V_{g,n}
 =\frac{J_{g,n+1}(k,0^n)}{p_k V_{g,n}},\qquad k\geq2.
\end{equation}
The derivative is taken with the other times fixed and is then
evaluated on the family \eqref{eq:family-y}.
The preceding calculation of $T_n$ gives
\begin{align}
 (2g+n-2)\frac{V_{g,n}}{V_{g,n+1}}
 &=A_t-\frac{T_n}{g}+O_n(g^{-2}),\label{eq:marking-next}\\
 T_n&=A_t^2\left[\frac{B_t(7-2n)}2-2M_t-2n+4\right]
 -\frac{A_t}4.\label{eq:T}
\end{align}
Inverting \eqref{eq:marking-next}, we obtain
\[
 \frac{V_{g,n+1}}{V_{g,n}}
 =\frac{2g+n-2}{A_t}+\frac{2T_n}{A_t^2}+O_n(g^{-1}).
\]
Multiply this expansion by \eqref{eq:E-expansion} at $n+1$ markings.
The result is
\begin{equation}\label{eq:R}
 R_{g,n,k}=\frac1{p_k}\left[
 \frac{2g+n-5/2}{A_t}
 -\frac{2(2k+1)}{\kappa_t}(2k+2n-4-\beta_t)
 \right]
 +\frac{O_n((k+1)^4/g)}{p_k}.
\end{equation}
This holds uniformly in $t$ and all $k\geq2$.
For $k\leq\sqrt g$, the errors in the product are bounded by
$C_n(k+1)^4/g$ before division by $p_k$.
For $k>\sqrt g$, monotonicity and the leading marking ratio give
$|p_kR_{g,n,k}|\leq C_ng$. The main term has magnitude at most
$C_n(g+k^2)\leq C_n'(k+1)^4/g$, which gives the same remainder
bound. At parent marking count $n=2$, the inserted descendant has
three markings, as required by Proposition~\ref{prop:first-correction}.

Differentiation along the family gives
\begin{equation}\label{eq:exact-derivative}
 \frac{\dd}{\dd t}\log V^{(t)}_{g,n}
 =\sum_{k\geq2}\dot s_k(t)R_{g,n,k}(t).
\end{equation}
By \eqref{eq:weights}, the sum of the remainder terms in
\eqref{eq:R} is $O_n(g^{-1})$, uniformly in $t$.

To identify the main term, allow an independent variation of $s_k$,
so that the zero $a$ of $y$ may vary. Differentiating $y(a)=0$,
the action integral, and $\kappa=-2a y'(a)$ gives
\begin{align}
 \partial_{s_k}A&=-\frac{2a^{2k+1}}{(2k+1)!!},\label{eq:variations}\\
 \partial_{s_k}a&=\frac{a^{2k-1}}{(2k-1)!!\,y'(a)},\notag\\
 \partial_{s_k}\kappa
 &=\frac{2a^{2k-1}}{(2k-1)!!}
 \left(2k-2-\frac{ay''(a)}{y'(a)}\right).\notag
\end{align}
At $a=1/2$, these formulas identify the main term in \eqref{eq:R} as
\begin{equation}\label{eq:model-derivative}
 \partial_{s_k}
 \left[-(2g+n-5/2)\log A+(n-1)\log(2a)
 -\frac12\log\kappa\right].
\end{equation}
In this substitution the conversion from $(2k-1)!!$ to $p_k$
contributes the factor $2(2k+1)$.
Along the family \eqref{eq:family-y}, $a=1/2$ is constant.
Summing \eqref{eq:model-derivative} against $\dot s_k$ therefore
gives \eqref{eq:transport-proof} again.

\begin{remark}
For the leading constant alone, one may replace the remainder in
Proposition~\ref{prop:first-correction} by $o(g^{-1})$ for each fixed
$k$, uniformly in $t$. The bound \eqref{eq:deficit} controls the
insertion remainder, after multiplication by $p_k$, by
$C_n(1+k^2)$. Equation~\eqref{eq:weights} then permits uniform
dominated convergence in \eqref{eq:exact-derivative}, giving
$\sup_t|\dd\log\mathcal B_{g,n}/\dd t|=o(1)$.
This already proves equality of the limiting amplitudes.
\end{remark}

\section{Painlev\'e I normalization}\label{sec:anchor}
At $t=0$, the spectral curve is $y_0=z-4z^3$. Thus $s_2=12$, and
all other shifted times vanish. In this section, $x$ denotes the
zero descendant time, rather than the spectral coordinate $x(z)$.
We use the genus convention $F=\sum_{g\geq0}\hbar^{2g}F_g$ for
the descendant free energy and put $u=F_{xx}$. The string equation and the KdV equation give
\begin{equation}\label{eq:cubic-string}
 u=x+6u^2+\hbar^2u'',\qquad
 u(x,\hbar)=\sum_{g\geq0}\hbar^{2g}u_g(x).
\end{equation}
Indeed, at $s_2=12$ the string equation is
$F_x=x^2/2+12F_{s_1}$. Differentiating once and substituting
$F_{xs_1}=u^2/2+\hbar^2u''/12$ gives
\eqref{eq:cubic-string}.

Put $X=1-24x$. The formal solution with the topological initial
condition has the form
\begin{equation}\label{eq:cubic-u}
 u_0=\frac{1-\sqrt X}{12},\qquad
 u_g=c_gX^{(1-5g)/2}\quad(g\geq1).
\end{equation}
Substitution into \eqref{eq:cubic-string} gives
\begin{equation}\label{eq:c-rec}
 c_1=12,\qquad
 c_g=6\sum_{h=1}^{g-1}c_hc_{g-h}
 +144(5g-6)(5g-4)c_{g-1}\quad(g\geq2).
\end{equation}
Since differentiation twice with respect to $x$ inserts two zero
descendants, we have
\begin{equation}\label{eq:c-volume}
 V^{(0)}_{g,2}=u_g(0)=c_g.
\end{equation}
Further differentiation of \eqref{eq:cubic-u} gives, for $n\geq2$,
\[
 V^{(0)}_{g,n}
 =c_g12^{n-2}\prod_{j=0}^{n-3}(5g-1+2j).
\]
To obtain the one-marking value, we use the string equation at zero
times in the form $F_{00}=12F_{01}$. The dimension constraint and the
dilaton equation at $s_2=12$ give $F_{01}=(5g-3)F_0$. Hence
$V^{(0)}_{g,1}=c_g/[12(5g-3)]$, which proves
\eqref{eq:cubic-fixedn} without an integration constant.
For $g\geq2$, a further application of the string and dilaton
equations gives
$V^{(0)}_{g,0}=c_g/[144(5g-3)(5g-5)]$.
Here the unmarked value is the connected shifted descendant free
energy.

Let $v_g$ be the coefficients of the standard Painlev\'e~I expansion
\begin{equation}\label{eq:PI}
 -\frac16v''+v^2=z,\qquad
 v(z)=z^{1/2}\sum_{g\geq0}v_g z^{-5g/2},\qquad v_0=1.
\end{equation}
They satisfy
\begin{equation}\label{eq:PI-rec}
 v_g=\frac{25(g-1)^2-1}{48}v_{g-1}
 -\frac12\sum_{h=1}^{g-1}v_hv_{g-h},\qquad v_1=-\frac1{48}.
\end{equation}
Comparison of \eqref{eq:c-rec} and \eqref{eq:PI-rec}, including
their initial values, yields
\begin{equation}\label{eq:PI-scale}
 c_g=-\frac{6912^g}{12}\,v_g.
\end{equation}
The linear terms agree because $6912/48=144$ and
$(5g-6)(5g-4)=25(g-1)^2-1$; the quadratic terms agree under the
same substitution.

The large-order theorem of \cite[Section 2.3, equation (2.39)]{GIKM}
states that
\begin{equation}\label{eq:PI-asym}
 v_g=-\frac{3^{1/4}}{2\pi^{3/2}}
 \left(\frac{8\sqrt3}{5}\right)^{-2g+1/2}
 \Gamma(2g-1/2)\bigl(1+O(g^{-1})\bigr).
\end{equation}
This is the proved zero-instanton coefficient asymptotic, obtained
from the Riemann--Hilbert analysis of tritronqu\'ee solutions.
We use no assertion about higher-instanton resurgence.

Writing $A_{\rm PI}=8\sqrt3/5$, we have
\[
 6912/A_{\rm PI}^2=900=30^2,
 \qquad 3^{1/4}A_{\rm PI}^{1/2}=2\sqrt{6/5}.
\]
It follows from \eqref{eq:PI-scale}--\eqref{eq:PI-asym} that
\begin{equation}\label{eq:c-asym}
 V^{(0)}_{g,2}
 =\frac{30^{2g-1/2}\Gamma(2g-1/2)}{2\pi^{3/2}}
 \bigl(1+O(g^{-1})\bigr).
\end{equation}
Since $A_0=1/30$ and $\kappa_0=2$, this gives
\begin{equation}\label{eq:anchor-amplitude}
 \mathcal B_{g,2}(0)
 =\frac1{\sqrt2\,\pi^{3/2}}\bigl(1+O(g^{-1})\bigr).
\end{equation}

\section{The Weil--Petersson constant and fixed marking numbers}
\label{sec:constant}
\begin{proof}[Proof of Corollary~\ref{cor:constant}]
Put $\mathcal B_g(t)=\mathcal B_{g,2}(t)$. By
\eqref{eq:uniform-transport}, there is a constant $C$, independent
of $g$ and $t$, such that, for all sufficiently large $g$,
\[
 \left|\frac{\dd}{\dd t}\log\mathcal B_g(t)\right|
 \leq\frac Cg,\qquad 0\leq t\leq1.
\]
Integrating over $[0,1]$, we obtain
\[
 \left|\log\frac{\mathcal B_g(1)}{\mathcal B_g(0)}\right|
 \leq\frac Cg.
\]
Thus
\[
 e^{-C/g}\leq\frac{\mathcal B_g(1)}{\mathcal B_g(0)}
 \leq e^{C/g},\qquad
 \frac{\mathcal B_g(1)}{\mathcal B_g(0)}=1+O(g^{-1}).
\]
Together with \eqref{eq:anchor-amplitude}, this gives
\[
 \mathcal B_g(1)=\frac1{\sqrt2\,\pi^{3/2}}
 \bigl(1+O(g^{-1})\bigr).
\]
Since $A_1=1/(4\pi^2)$ and $\kappa_1=1$, the definition of
$\mathcal B_g$ now yields
\begin{equation}\label{eq:wp-new}
 V^{\WP}_{g,2}
 =\frac{(4\pi^2)^{2g-1/2}\Gamma(2g-1/2)}
 {\sqrt2\,\pi^{3/2}}
 \bigl(1+O(g^{-1})\bigr).
\end{equation}
The Gamma quotient satisfies
\[
 \frac{\Gamma(2g-1/2)}{\Gamma(2g)}
 =(2g)^{-1/2}\bigl(1+O(g^{-1})\bigr).
\]
Consequently, \eqref{eq:wp-new} takes the form
\[
 V^{\WP}_{g,2}
 =\frac1{\sqrt\pi}
 \frac{(4\pi^2)^{2g-1}\Gamma(2g)}{\sqrt g}
 \bigl(1+O(g^{-1})\bigr).
\]
In the normalization of \eqref{eq:mz}, this is
\begin{align*}
 C_{\WP}
 &=\lim_{g\to\infty}
 \frac{(4\pi^2)^{1/2}}{\sqrt2\,\pi^{3/2}}
 \frac{\sqrt g\,\Gamma(2g-1/2)}{\Gamma(2g)}\\
 &=\frac{2\pi}{\sqrt2\,\pi^{3/2}}\cdot\frac1{\sqrt2}
 =\frac1{\sqrt\pi}.
\end{align*}
\end{proof}

We next extend the asymptotic to every fixed number of markings.

\begin{proposition}[Asymptotics for fixed marking number]
\label{prop:all-n}
For each fixed $n\geq0$, uniformly for $t\in[0,1]$,
\begin{equation}\label{eq:all-n-asym}
 V^{(t)}_{g,n}=
 \frac{\Gamma(2g+n-5/2)A_t^{-2g-n+5/2}}
 {\sqrt2\,\pi^{3/2}\sqrt{\kappa_t}}
 \bigl(1+O_n(g^{-1})\bigr).
\end{equation}
For $n=0$, the left side is defined by the connected shifted
descendant free energy
\[
 V^{(t)}_{g,0}:=
 \left\langle\exp\left(\sum_{k\geq2}s_k(t)\tau_k\right)\right\rangle_g,
 \qquad g\geq2.
\]
At $t=1$, we have $V^{(1)}_{g,0}=V^{\WP}_{g,0}$ exactly.
For general $t$, the unmarked positivity asserted here holds for
all sufficiently large $g$, with a threshold independent of $t$.
The comparison \eqref{eq:endpoint-upper} remains restricted to
$n\geq1$.
\end{proposition}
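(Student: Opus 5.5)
The plan has three stages: the case $n=2$, then every $n\geq1$ by marking ratios, then $n=0$ by the dilaton equation. For $n=2$, the statement is exactly a restatement of what has been proved. The anchor \eqref{eq:anchor-amplitude} gives $\mathcal B_{g,2}(0)=(\sqrt2\pi^{3/2})^{-1}(1+O(g^{-1}))$. The ratio bound \eqref{eq:transport-ratio} with $u=0$ then gives $\mathcal B_{g,2}(t)=(\sqrt2\pi^{3/2})^{-1}(1+O(g^{-1}))$ uniformly in $t$. Unwinding the definition \eqref{eq:amplitude} is precisely \eqref{eq:all-n-asym} for $n=2$. The same argument applies verbatim to each fixed $n\geq2$, provided we have the anchor at $t=0$. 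That anchor I would obtain from the exact cubic formula
\[
V^{(0)}_{g,n}=c_g12^{n-2}\prod_{j=0}^{n-3}(5g-1+2j),
\]
together with \eqref{eq:c-asym}. Since $12(5g-1+2j)\cdot 30^{-1}\cdot$ (ratio of Gamma factors $\Gamma(2g+j+1-5/2)/\Gamma(2g+j-5/2)=2g+j-5/2$) equals $(60g+\dots)/(60g+\dots)=1+O(g^{-1})$, each extra marking changes $\mathcal B_{g,n}(0)$ by a factor $1+O(g^{-1})$. Alternatively, I would skip the cubic anchor and work at each fixed $t$ directly.

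The cleaner route, uniform in $t$, uses the marking ratio. From \eqref{eq:marking-next} and \eqref{eq:T} with parent count $n\geq2$,
\[
\frac{V_{g,n+1}}{V_{g,n}}=\frac{2g+n-2}{A_t}+O_n(1)=\frac{2g+n-5/2}{A_t}\bigl(1+O_n(g^{-1})\bigr),
\]
which is exactly the ratio of the right-hand sides of \eqref{eq:all-n-asym} at $n+1$ and $n$. Induction from $n=2$ then gives every $n\geq2$, uniformly in $t$. For $n=1$, I would use Lemma~\ref{lem:marking} with parent $n=1$: $V_{g,2}/V_{g,1}=(2g-1)/A_t+O(1)=(2g-3/2)A_t^{-1}(1+O(g^{-1}))$. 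Dividing the $n=2$ asymptotic by this gives the $n=1$ case. Uniformity holds because $A_t$ is bounded below by $A_{\WP}$.

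The main obstacle is $n=0$: no recursion with a distinguished marking is available, and positivity is not known a priori. Here I would use the dilaton equation in the form \eqref{eq:dilaton} with $n=0$, namely
\[
(2g-2)V_{g,0}=\sum_{k\geq1}w_kJ_{g,1}(k).
\]
This equation holds for the shifted descendant free energy, since the derivation from \eqref{eq:shifted} is purely formal and does not need $n\geq1$. The descendants $J_{g,1}(k)$ have a single marking, while Proposition~\ref{prop:deficit} is stated for $n\geq2$. I therefore need $J_{g,1}(k)/V_{g,1}=1+O(k^2/g)$. I would get this from the string equation, which removes a $\tau_0$, or, more robustly, from the defect recursion of Lemma~\ref{lem:mono} at $n=1$. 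In that recursion only nonseparating and separating terms occur, and they are bounded by $V_{g-1,2}$ and $\sum V_{h,1}V_{g-h,1}$. These are $O(V_{g,1}/g)$ by \eqref{eq:coarse-child}-type arguments together with \eqref{eq:genus-inequality} and Proposition~\ref{prop:refined}.

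With that estimate and the uniform moment bounds \eqref{eq:weights}, we get $\sum_k w_kJ_{g,1}(k)=V_{g,1}(A_t+O(g^{-1}))$ by \eqref{eq:sumw}. Hence
\[
V_{g,0}=\frac{A_t}{2g-2}V_{g,1}\bigl(1+O(g^{-1})\bigr)=\frac{A_t}{2g-7/2}V_{g,1}\bigl(1+O(g^{-1})\bigr),
\]
which matches \eqref{eq:all-n-asym} at $n=0$. The right side is positive for $g$ beyond a $t$-independent threshold, which gives the asserted positivity. Finally, at $t=1$ the identity $V^{(1)}_{g,0}=V^{\WP}_{g,0}$ should follow from the sine-curve identification with Mirzakhani's volumes. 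Concretely, the descendant formula for $V^{\WP}_{g,0}$ as $\langle\exp(2\pi^2\kappa_1)\rangle_g$ becomes, after the kappa-to-shifted-$\tau$ conversion described after \eqref{eq:gaussian}, the connected free energy with times $s_k(1)$.
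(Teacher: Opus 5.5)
Your proposal is correct and follows the paper's proof essentially step by step. For $n\geq2$ you use the cubic anchor with uniform transport; your marking-ratio induction from $n=2$ is an equally valid shortcut. For $n=1$ you use the one-parent marking ratio. For $n=0$ you extend the descendant deficit to one marking through the merge-free defect recursion, and then apply the shifted dilaton identity and the Schur series $e^{-2\pi^2v}$ at $t=1$. Your $n=1$ bounds $V_{g-1,2}=O(V_{g,1}/g)$ and $\sum_h V_{h,1}V_{g-h,1}=O(V_{g,1}/g)$ cite estimates stated only for $n\geq2$; they go through, as in the paper, after converting to two markings with $V_{g,2}\leq CgV_{g,1}$ and the lower marking estimate. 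The remaining slips are cosmetic and do not affect the $1+O(g^{-1})$ conclusions: in your cubic check the Gamma ratio should divide rather than multiply, and the $n=0$ prefactor is $A_t/(2g-5/2)$, not $A_t/(2g-7/2)$.
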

\begin{proof}
For $n\geq2$, the exact cubic fixed-marking formula and
\eqref{eq:c-asym} give the normalized leading amplitude
$1/(\sqrt2\pi^{3/2})$. Uniform transport proves
\eqref{eq:all-n-asym} for these marking numbers. The leading
marking ratio with one parent marking then gives the result for
$n=1$.

For $n=0$, we first extend the coarse descendant deficit estimate
to one marking, using the estimates already proved. We have
\[
 \frac{V_{g-1,3}}{V_{g,1}}
 =\frac{V_{g-1,4}}{V_{g,2}}
 \frac{V_{g,2}/V_{g,1}}{V_{g-1,4}/V_{g-1,3}}
 =1+O(g^{-1}).
\]
Here \eqref{eq:genus-leading} applies with two parent markings,
and both marking ratios have leading term $(2g-1)/A_t$.
The lower marking estimate now gives
$V_{g-1,2}=O(V_{g,1}/g)$.

With one parent marking, the only separating marking pattern is
$(1,1)$. For $1\leq h\leq g/2$, the refined genus comparison with
two reference markings gives
\[
 \frac{V_{h,1}V_{g-h,1}}{V_{g,1}}
 \leq C h^P\frac{r_*^{2h}}{\binom gh^2}.
\]
Indeed, conversion of the large factor and the parent to two
markings contributes at most $C g/(g-h)$, which is bounded in this
range. Genus-zero factors with one marking are unstable. The
convolution lemma therefore bounds the separating contribution by
$O(V_{g,1}/g^2)$.

The exact one-marking defect has no merge terms. Applying the
preceding bounds and descendant monotonicity to its nonseparating
and separating terms, we obtain
\[
 0\leq1-J_{g,1}(k)/V_{g,1}\leq\min\{1,Ck^2/g\}.
\]
The exact shifted dilaton identity with no parent markings then
gives
\[
 (2g-2)\frac{V_{g,0}}{V_{g,1}}=A_t+O(g^{-1}).
\]
Since $A_t\geq1/(4\pi^2)$ and the error is uniform in $t$, there
is a single $g_0$ such that $V^{(t)}_{g,0}>0$ for all
$g\geq g_0$ and $t\in[0,1]$. This proves
\eqref{eq:all-n-asym} for $n=0$. It gives no all-genus comparison
between the unmarked values and their cubic and WP endpoints.

For unmarked curves, the sine specialization is again
$\exp(2\pi^2\kappa_1)$, with Schur series
$1-\sum_{k\geq2}s_kv^{k-1}=e^{-2\pi^2v}$.
Thus the endpoint $t=1$ has the ordinary WP normalization.
\end{proof}

\begin{remark}[Unmarked positivity]
The polynomial-positivity argument for
\eqref{eq:endpoint-upper} does not apply to the free energy.
For a general kernel
$q(s)=1+b_1s+b_2s^2+b_3s^3+\cdots$, the genus-two unmarked value is
\[
 V_{2,0}=-\frac4{15}b_1^3+\frac{17}{6}b_1b_2+\frac{35}{6}b_3.
\]
To obtain this identity, expand
$s_2^3\langle\tau_2^3\rangle_2/6+
s_2s_3\langle\tau_2\tau_3\rangle_2+s_4\langle\tau_4\rangle_2$,
using the intersection numbers $7/240$, $29/5760$, and $1/1152$,
respectively. The resulting polynomial has a negative coefficient
in the kernel variables. Thus the marked induction does not prove
an unmarked comparison, although such a comparison might still
hold for the family considered here.
At the endpoints the formula gives $V^{(0)}_{2,0}=42/5$ and
$V^{(1)}_{2,0}=43\pi^6/2160$, in agreement with the normalizations
above. The determination of the two-marking constant requires no
unmarked positivity assumption.
\end{remark}

The proof of the refined splitting estimate uses the one-sided
genus inequality, not the genus asymptotic obtained from it.
The bound $B_tA_t/A_0\leq15/\pi^2<2$ permits comparison of the
smaller component with the cubic model and of the larger component
with the deformed action. The Painlev\'e coefficient theorem enters
only after uniform transport has been proved, to determine the
absolute normalization.

\appendix
\section{The first correction term}
The published WP correction coefficient provides a comparison with
the cubic model. It is not used in the proof of the leading constant.
Write
\[
 \mathcal B_{g,n}(j)=\mathcal B_\infty
 \left(1+\frac{\beta_n^{(j)}}g+O_n(g^{-2})\right),\qquad
 j=0,1,\qquad \mathcal B_\infty=\frac1{\sqrt2\pi^{3/2}}.
\]
The all-orders zero-instanton expansion proved in
\cite[(2.43)--(2.45)]{GIKM}, with the first coefficient given by (1.7),
is
\[
 v_g=-\frac{3^{1/4}}{2\pi^{3/2}}A_{\rm PI}^{-2g+1/2}
 \Gamma(2g-1/2)
 \left(1-\frac1{8(2g-3/2)}+O(g^{-2})\right).
\]
The cubic rescaling and the formula for a fixed number of markings give
\begin{equation}\label{eq:beta-cubic}
 \beta_n^{(0)}=-\frac1{16}-\frac{(n-2)(n-4)}{20}.
\end{equation}
For $n\geq2$, expand the finite product in
\eqref{eq:cubic-fixedn} and the corresponding Gamma ratio. For $n=1$
and $n=0$, the same formula follows from the exact cubic relations.

The WP coefficient in \cite[Remark 1.3]{MZ} is
\[
 c_n^{(1)}=-\frac{n^2}{2\pi^2}
 -\left(\frac14-\frac5{2\pi^2}\right)n
 +\frac7{12}-\frac{17}{6\pi^2}.
\]
Its derivation does not require the value of the leading constant.
Since
\[
 \frac{\sqrt{2g}\,\Gamma(2g+n-5/2)}{\Gamma(2g+n-2)}
 =1+\frac{11-4n}{16g}+O_n(g^{-2}),
\]
the normalization used here gives
\begin{equation}\label{eq:beta-WP}
 \beta_n^{(1)}=-\frac5{48}
 +\frac{-3n^2+15n-17}{6\pi^2}.
\end{equation}
In particular,
\begin{equation}\label{eq:nonzero-correction}
 \frac{\mathcal B_{g,2}(1)}{\mathcal B_{g,2}(0)}
 =1+\frac{-1/24+1/(6\pi^2)}g+O(g^{-2}).
\end{equation}
The coefficient is approximately $-0.0247798027$. Thus the limiting
amplitude is constant along the path, but the first correction term is
not. Equation~\eqref{eq:nonzero-correction} gives the coefficient of
$g^{-1}$ in the endpoint ratio left undetermined by the uniform
transport theorem.

\section{Transport equation for the first correction}\label{app:next}
We compute the first correction term for every $t\in[0,1]$. The proof
uses the recursion and uniform estimates established above. Throughout
this appendix, write $A=A_t$, $B=B_t$, and $M=M_t$.

\subsection{Several positive descendants and the child ratios}
\begin{lemma}[Several descendants]\label{lem:multi-descendants}
Let $N\geq3$ be fixed. For a nonnegative degree vector $\mathbf d$, put
$d_*=|\mathbf d|$ and $m=\#\{i:d_i>0\}$, and define
\begin{equation}\label{eq:multi-E}
 \mathcal E_N(\mathbf d)=2A\left[
 Bd_*^2+\bigl(B(N-4)+2M\bigr)d_*+\frac{m(m+5-2N)}2\right].
\end{equation}
Then, uniformly in $t\in[0,1]$ and in $\mathbf d$,
\begin{equation}\label{eq:multi-expansion}
 \frac{J_{g,N}(\mathbf d)}{V_{g,N}}
 =1-\frac{\mathcal E_N(\mathbf d)}g
 +O_N\left(\frac{(d_*+1)^4}{g^2}\right).
\end{equation}
In particular, $\mathcal E_N(\bzero)=0$, and the case of one positive
degree agrees with \eqref{eq:E}.
\end{lemma}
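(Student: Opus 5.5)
We argue exactly as in the proof of Proposition~\ref{prop:first-correction}, but now we raise the degrees one at a time along a path from $\bzero$ to $\mathbf d$. Order the positive entries as $d_1,\ldots,d_m$. Raise $d_1$ from $0$ to its value, then $d_2$, and so on. At each step the degree vector has the form $\mathbf c=(c_1,\ldots,c_{j-1},c_j,0,\ldots)$, where $c_1,\ldots,c_{j-1}$ are final and positive. The adjacent defect $J_{g,N}(\mathbf c)-J_{g,N}(\mathbf c+\mathbf e_j)$ is given exactly by Lemma~\ref{lem:mono}. In it we replace each descendant in the merge and nonseparating terms by its cusp value. By Proposition~\ref{prop:deficit} and \eqref{eq:child-leading} (both children have at least two markings since $N\geq3$), the error is $O_N((|\mathbf c|+1)^3/g^2)$. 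The separating terms are $O_N((|\mathbf c|+1)/g^2)$ by \eqref{eq:split-fine}. For $d_*\leq\sqrt g$ the $L$-sums may be extended to infinity, and the range $d_*>\sqrt g$ is handled by monotonicity exactly as before. Summing the errors over the $d_*$ steps gives the $O_N((d_*+1)^4/g^2)$ remainder.

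The leading term of each step is computed from the moment identities of Lemma~\ref{lem:kernel}. It is multiplied by $g/V_{g,N}$ and uses $V_{g,N-1}/V_{g,N}\sim V_{g-1,N+1}/V_{g,N}\sim A/(2g)$.

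\emph{Nonseparating term.} The number of index pairs is $(L+c_j-1)_+$. This gives $4A(M+(c_j-1)B)$ when $c_j\geq1$, and $4A(M-B+1)$ when $c_j=0$.

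\emph{Merge term with partner $i$.} Its weight is $2A(2c_i+1)\sum_{L}\Delta_L$, where the sum runs over the $L$ with $L+c_j+c_i-1\geq0$.
\begin{itemize}
\item If $c_i$ or $c_j$ is positive, this sum is all of $B$.
\item If both are zero, it is $B-1$.
\end{itemize}

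The new feature compared with \eqref{eq:E} is twofold. Partners with positive degree carry the factor $(2c_i+1)$, and they give the full moment $B$ even when $c_j=0$. So in the step starting a new positive entry $j$, the merge contribution is
\[
2A\Bigl[(N-j)(B-1)+\sum_{i<j}(2c_i+1)B\Bigr].
\]
In a step with $c_j\geq1$ it is $2A\bigl[(N-j)B+\sum_{i<j}(2c_i+1)B\bigr]$.

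It remains to sum these step contributions and check that the total equals \eqref{eq:multi-E}. This is the main bookkeeping obstacle.
\begin{itemize}
\item The cross terms $\sum_{i<j}2c_iB$ summed over the $c_j$ steps produce $4AB\sum_{i<j}d_id_j$. Together with the diagonal quadratic $2ABd_j^2$ coming from each single coordinate, this completes $2ABd_*^2$.
\item The linear coefficient $2A(B(N-4)+2M)$ per unit of degree has to be checked. The $+B$ per earlier positive partner, which is $2AB(j-1)$ per step, must combine with the $(N-j)B$ count to give $(N-1)B$ uniformly, as in \eqref{eq:dpositive}.
\item The terms independent of $d$ come only from the $m$ starting steps. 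The $j$th starting step contributes $2A[-(N-j)+(j-1)]+4A(1-B)+\ldots$. After the $B$- and $M$-parts are absorbed into the linear coefficient, the constant is $2A(2j-1-N)$ plus the shift $2A\cdot 2$ already present in \eqref{eq:E}. Summing over $j=1,\ldots,m$ gives $2A\,m(m+5-2N)/2$.
\end{itemize}
I expect the care to lie in exactly this regrouping. Each starting step's deviation from the generic step, namely $-2A(N-j)-4AB+4A$ together with the $-B$ hidden in the pair count, must be tracked consistently so that the $B$-dependent pieces cancel into the $d_*$-linear term.

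Finally, we check consistency. For $\mathbf d=\bzero$ every term vanishes. For $m=1$ the constant is $2A(3-N)$, which reproduces \eqref{eq:E}. The order of the path is irrelevant, since the left side is independent of it. This gives an independent check of the symmetric form of the formula.
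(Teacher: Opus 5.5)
Your route is the paper's. You use the exact adjacent difference from Lemma~\ref{lem:mono}, replace the merge and nonseparating children by cusp values via \eqref{eq:child-leading}, and apply the kernel moments and \eqref{eq:split-fine}. You then telescope along a path, with the same error count and the same treatment of $d_*>\sqrt g$. Your displayed step contributions are correct: the merge weights are $(2c_i+1)B$, or $B-1$ when both degrees vanish, and the pair sums are $M+(c_j-1)B$ and $M-B+1$. Your quadratic and linear bookkeeping is also correct.

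\textbf{The gap is the constant term, which you identified as the crux.} In the starting step for entry $j$, the $j-1$ earlier positive partners contribute $(j-1)B$. As your own second bullet says, all of this is absorbed into the uniform linear coefficient $(N-1)B$, so there is no extra constant $+(j-1)$. The only constant from the merges is the loss $-2A(N-j)$ from the $N-j$ zero partners. Relative to the generic formula at $c_j=0$, the pair count adds exactly $+4A$; the $-4AB$ you list as a deviation is already inside the generic value $4A(M-B)$. Hence the constant of the $j$th starting step is $2A(j+2-N)$, and
\[
 \sum_{j=1}^m 2A(j+2-N)=A\,m(m+5-2N),
\]
as required. With your stated constant $2A(2j-1-N)+4A$, the sum is $2A\,m(m+2-N)$. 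This exceeds the correct value by $A\,m(m-1)$, so it is wrong for every $m\geq2$, and your $m=1$ consistency check cannot detect the error.

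The paper avoids the path-dependent regrouping. It computes the leading increment for an arbitrary step,
\[
 I_i(\mathbf d)=2A\bigl\{B(2d_*+N-3)+2M-(N-3-m)\boldsymbol1_{\{d_i=0\}}\bigr\},
\]
and checks directly that $I_i(\mathbf d)=\mathcal E_N(\mathbf d+\mathbf e_i)-\mathcal E_N(\mathbf d)$. When $d_i=0$, the last bracketed term of $\mathcal E_N$ changes by $m+3-N$. On your path $m=j-1$ at the $j$th starting step, which recovers the corrected constant $2A(j+2-N)$. With that correction, your argument is complete.
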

\begin{proof}
By symmetry we may increase the first degree. Suppose first that
$d_1\geq1$. The merge terms in the adjacent-difference recursion have
total weight
$B\sum_{j=2}^N(2d_j+1)=B(2d_*-2d_1+N-1)$.
The number of nonnegative pairs $u+v=L+d_1-2$ is $L+d_1-1$;
this formula also gives zero when $d_1=1$ and $L=0$.
Its weighted sum is $M+(d_1-1)B$.
Using the two leading child ratios \eqref{eq:child-leading}, the
coefficient of $g^{-1}$ is therefore
\[
 2AB(2d_*-2d_1+N-1)+4A\{M+(d_1-1)B\}
 =2A\{B(2d_*+N-3)+2M\}.
\]

If $d_1=0$, there are $N-1-m$ other zero degrees. Each such merge
loses its $L=0$ term, whose weight is $\Delta_0=1$.
The nonseparating pair count is now $(L-1)_+$, with weighted sum
$M-B+1$. The merge loss is $-2A(N-1-m)$ and the change from the
generic pair count is $4A$. Thus in all cases the leading increment is
\begin{equation}\label{eq:multi-increment}
 I_i(\mathbf d)=2A\left\{B(2d_*+N-3)+2M
 -(N-3-m)\boldsymbol1_{\{d_i=0\}}\right\}.
\end{equation}
Direct subtraction in \eqref{eq:multi-E} gives
$I_i(\mathbf d)=\mathcal E_N(\mathbf d+\mathbf e_i)
-\mathcal E_N(\mathbf d)$: when $d_i=0$, the change in the last
term inside the brackets is $m+3-N$.

We next bound the error. Assume $d_*\leq\sqrt g$.
Each child degree has sum at most $d_*+L+1$.
The children in the merge and nonseparating terms have respectively
$N-1\geq2$ and $N+1$ markings. Proposition~\ref{prop:deficit}
therefore bounds the error in replacing a child by its cusp value by
$C_N(d_*+L+1)^2/g$. Its cusp ratio is $O_N(g^{-1})$.
The merge multiplicities sum to $O_N(d_*+1)$, and the number of
node pairs is $O(d_*+L+1)$. The error in either leading cusp ratio
is $O_N(g^{-2})$. By \eqref{eq:split-fine}, the separating terms
contribute at most $C_N(d_*+1)/g^2$ after the kernel sum.
Consequently the normalized adjacent error is bounded by
\[
 \frac{C_N}{g^2}\sum_{L\geq0}\Delta_L(d_*+L+1)^3
 \leq\frac{C_N'(d_*+1)^3}{g^2}.
\]
Extending the original finite $L$-sum to infinity changes this bound
by an exponentially small quantity: its cutoff is
$3g-3+N-d_*\geq c g$ for large $g$, and
Lemma~\ref{lem:kernel} applies.

Sum the adjacent estimate along any path of $d_*$ unit increments
from $\bzero$ to $\mathbf d$. The leading terms telescope to
$\mathcal E_N(\mathbf d)$, while the errors have sum at most
$C_N(d_*+1)^4/g^2$. This proves \eqref{eq:multi-expansion}
when $d_*\leq\sqrt g$.
For $d_*>\sqrt g$, monotonicity and
$|\mathcal E_N(\mathbf d)|\leq C_N(d_*+1)^2$ give
\[
 \left|\frac{J_{g,N}(\mathbf d)}{V_{g,N}}-1
       +\frac{\mathcal E_N(\mathbf d)}g\right|
 \leq 1+\frac{C_N(d_*+1)^2}{g}
 \leq\frac{C_N'(d_*+1)^4}{g^2}.
\]
This includes degrees beyond the dimension cutoff.
\end{proof}

Let $T_n$ be as in \eqref{eq:T}, and set
\begin{align}
 \alpha_n&=\frac{n-2}{2}+\frac{T_n}{A}
 =\frac{2n-5}{4}
 +A\left[\frac{B(7-2n)}2-2M-2n+4\right],\label{eq:alpha}\\
 \gamma_n&=-\mathcal E_n(1,0^{n-1})-6A(n-2)
 =-2A\bigl[(B+2)n-3B-3+2M\bigr].\label{eq:gamma}
\end{align}
The marking expansion gives
\[
 \frac{V_{g,n+1}}{V_{g,n}}
 =\frac{2g}{A}\left(1+\frac{\alpha_n}g+O_n(g^{-2})\right),
 \qquad n\geq2.
\]
For $n\geq3$, the KdV equation and \eqref{eq:multi-expansion} give
\[
 \frac{V_{g-1,n+2}}{V_{g,n}}
 =1+\frac{\gamma_n}g+O_n(g^{-2}).
\]
The term $6A(n-2)$ comes from the leading product contribution. There
are two orientations of a genus-zero component with three markings,
each with binomial multiplicity $n-2$. All positive-genus products
and the remaining genus-zero products are $O(g^{-2})$ relative to
$V_{g,n}$. The stable genus-zero terms must therefore be retained at
this order.

For $N\geq4$, put $\eta_N=1+\gamma_N-\alpha_{N+1}$. The required
child ratios are
\begin{align}
 \frac{V_{g,N-1}}{V_{g,N}}
 &=\frac{A}{2g}\left(1-\frac{\alpha_{N-1}}g+O_N(g^{-2})\right),
 \label{eq:next-child}\\
 \frac{V_{g-1,N+1}}{V_{g,N}}
 &=\frac{A}{2g}\left(1+\frac{\eta_N}g+O_N(g^{-2})\right).
 \notag
\end{align}

\subsection{The second descendant coefficient}
Write $b_1=[s]q_t(s)=1+t(\zeta(2)-1)$. For $N\geq4$ and $d\geq0$,
define
\begin{align}
 \mathcal C_{N,d}={}&
 2A(N-1)\sum_{\substack{L\geq0\\L+d\geq1}}\Delta_L
 \left[-\alpha_{N-1}-\mathcal E_{N-1}(L+d-1,0^{N-2})\right]
 \label{eq:Cnext}\\
 &+4A\sum_{L\geq0}\Delta_L\sum_{u+v=L+d-2}
 \left[\eta_N-\mathcal E_{N+1}(u,v,0^{N-1})\right]\notag\\
 &+4A^2\left[
 \left(\binom{N-1}{2}+\frac{b_1}{2}\right)
 \sum_{\substack{L\geq0\\L+d\geq2}}\Delta_L
 +\frac12\sum_{\substack{L\geq0\\L+d\geq3}}\Delta_L\right].\notag
\end{align}
These sums converge uniformly. A pair sum with negative total is
empty. Set
\begin{equation}\label{eq:Qnext}
 Q_{N,0}=0,\qquad Q_{N,k}=-\sum_{d=0}^{k-1}\mathcal C_{N,d}.
\end{equation}

\begin{proposition}[Second descendant correction]\label{prop:second}
For fixed $N\geq4$ and all $k\geq0$, uniformly in $t$,
\begin{equation}\label{eq:second-expansion}
 \frac{J_{g,N}(k,0^{N-1})}{V_{g,N}}
 =1-\frac{E_{N,k}}g+\frac{Q_{N,k}}{g^2}
 +O_N((k+1)^6/g^3),
\end{equation}
where $E_{N,0}=0$.
\end{proposition}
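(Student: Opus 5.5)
The plan is to repeat the telescoping argument of Proposition~\ref{prop:first-correction} and Lemma~\ref{lem:multi-descendants} one order further. We work with the adjacent difference $J_{g,N}(d,0^{N-1})-J_{g,N}(d+1,0^{N-1})$ given exactly by Lemma~\ref{lem:mono}. We divide it by $V_{g,N}$ and show
\[
 \frac{J_{g,N}(d,0^{N-1})-J_{g,N}(d+1,0^{N-1})}{V_{g,N}}
 =\frac{I(d)}{g}+\frac{\mathcal C_{N,d}}{g^2}+O_N\!\left(\frac{(d+1)^5}{g^3}\right),
\]
where $I(d)=E_{N,d+1}-E_{N,d}$ is the leading increment already identified in \eqref{eq:d0}--\eqref{eq:dpositive}. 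Summing over $0\le d<k$ then yields $-E_{N,k}/g+Q_{N,k}/g^2$ by \eqref{eq:Qnext}, with total error $O_N((k+1)^6/g^3)$ for $k\le\sqrt g$. For $k>\sqrt g$ we argue as before: monotonicity \eqref{eq:monotone} bounds the ratio by $1$, and $|E_{N,k}|/g+|Q_{N,k}|/g^2\le C_N(1+k^2/g+k^4/g^2)$, which is absorbed in $(k+1)^6/g^3$ in that range. We also extend the finite $L$-sum to infinity with an exponentially small error, using Lemma~\ref{lem:kernel}.

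The adjacent difference has three kinds of terms, which we expand in turn.

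\emph{Merge terms.} Each is $\Delta_L\,J_{g,N-1}(L+d-1,0^{N-2})$ with multiplicity $4(N-1)$. We write it as $V_{g,N-1}\cdot J/V_{g,N-1}$ and use the child ratio \eqref{eq:next-child} with correction $-\alpha_{N-1}/g$. The descendant ratio at $N-1\ge3$ markings comes from Proposition~\ref{prop:first-correction}, which gives $1-\mathcal E_{N-1}(L+d-1,0^{N-2})/g+O((L+d+1)^4/g^2)$. The product produces the first line of \eqref{eq:Cnext}. The constraint $L+d\ge1$ encodes the vanishing at negative index.

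\emph{Nonseparating terms.} Each is $\Delta_L\,J_{g-1,N+1}(u,v,0^{N-1})$. We use the second child ratio in \eqref{eq:next-child} with $\eta_N$, and Lemma~\ref{lem:multi-descendants} at $N+1$ markings for two positive degrees. This gives the second line. The genus shift $g-1$ versus $g$ in the denominator $1/g$ only costs $O(g^{-3})$ after multiplying by the prefactor $A/(2g)$.

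\emph{Separating terms.} By \eqref{eq:split-fine} these are $O(g^{-2})$, and at order $g^{-2}$ only bounded-genus factors survive. We retain three contributions. The first is genus $0$ with three markings, which requires $u=0$ and uses $J_{0,3}=1$ in each orientation, paired with $J_{g,N-2}(v,\ldots)$; choosing two of the $N-1$ other markings gives $\binom{N-1}{2}$. The second is a genus-one factor $J_{1,1}(0)=b_1/2$, paired with $V_{g-1,N}$ and its descendants. The third is $J_{1,1}(1)=1/2$, which requires $u=1$ and hence $L+d\ge3$. In each case the large factor is replaced by its cusp value. Its ratio to $V_{g,N}$ is $A^2/(4g^2)$ to leading order, obtained by two marking ratios from Lemma~\ref{lem:marking} and \eqref{eq:genus-leading}. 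Together with the factor $8$ this gives the $4A^2$ line. All other splittings, with the small genus at least $1$ and at least two markings, or small genus at least $2$, are $O(g^{-3})$ by the refined convolution bound \eqref{eq:product-fine} and Lemma~\ref{lem:binomial}.

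\textbf{Main obstacle.} The hardest step is bookkeeping the errors uniformly in $d$ and $L$ so that the adjacent error is $O((d+1)^5/g^3)$, or at least something summable to $(k+1)^6/g^3$. The descendant remainders from Lemma~\ref{lem:multi-descendants} are $O((L+d+1)^4/g^2)$. These are multiplied by a child ratio $O(1/g)$ and by the pair count $O(L+d+1)$, and are then summed against $\Delta_L$ with its uniform moments \eqref{eq:moments-delta}. For this to work we need the child ratios \eqref{eq:next-child} to hold at precision $O(g^{-3})$. Those ratios must be justified from \eqref{eq:marking-next} and from the KdV expansion of $V_{g-1,n+2}/V_{g,n}$ with its $O(g^{-2})$ error, which is itself a consequence of Lemma~\ref{lem:multi-descendants} applied inside \eqref{eq:kdv}. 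I would verify this first, since the genus-zero three-marking splittings contribute at the same order and must be tracked exactly rather than bounded.
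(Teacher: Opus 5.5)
Your proposal is correct and follows essentially the same route as the paper. You prove the adjacent-difference expansion with error $O_N((d+1)^5/g^3)$ by combining the child ratios \eqref{eq:next-child} with the first-order descendant expansions for the merge and nonseparating children, keep exactly the three separating types $(0,3)$ and $(1,1)$ with node degree $0$ or $1$, then telescope and use monotonicity for $k>\sqrt g$.

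The only bookkeeping to tighten is in discarding the other splittings. You need the tail form of Lemma~\ref{lem:binomial}: its proof gives $O(g^{-4})$ for the sum over $h\geq2$, whereas the stated bound $O(g^{-2})$ is too weak. Genus-zero small components with $k\geq4$ markings should also be listed, each contributing $O_N(g^{1-k})=O_N(g^{-3})$.
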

\begin{proof}
Put $I_d=E_{N,d+1}-E_{N,d}$. We first prove, for $d\leq\sqrt g$,
\begin{equation}\label{eq:second-adjacent}
 \frac{J_{g,N}(d,0^{N-1})-J_{g,N}(d+1,0^{N-1})}{V_{g,N}}
 =\frac{I_d}{g}+\frac{\mathcal C_{N,d}}{g^2}
 +O_N\left(\frac{(d+1)^5}{g^3}\right).
\end{equation}
For a merge child of degree vector $\mathbf e$, multiplication of
\eqref{eq:multi-expansion} by \eqref{eq:next-child} gives
\[
 \frac{J_{g,N-1}(\mathbf e)}{V_{g,N}}
 =\frac A{2g}\left[1+
 \frac{-\alpha_{N-1}-\mathcal E_{N-1}(\mathbf e)}g
 +O_N\left(\frac{(|\mathbf e|+1)^4}{g^2}\right)\right].
\]
For the nonseparating child the coefficient in brackets is
$\eta_N-\mathcal E_{N+1}(\mathbf e)$ instead. Replacing $g-1$ by
$g$ in the child expansion changes only the stated error. Thus these
two contributions give the first two lines of \eqref{eq:Cnext}.

The separating contributions of order $g^{-2}$ are as follows. The
column headed multiplicity counts one orientation; each row occurs
in both orientations.
\[
\begin{array}{c|c|c|c|c}
 (h,k)&\text{node degree}&\text{small coefficient}
 &\text{multiplicity}&\text{condition}\\\hline
 (0,3)&0&1&\binom{N-1}{2}&L+d\geq2\\
 (1,1)&0&b_1/2&1&L+d\geq2\\
 (1,1)&1&1/2&1&L+d\geq3
\end{array}
\]
In the first row, the large cusp factor is $V_{g,N-2}$; in the last
two rows it is $V_{g-1,N}$. In each case its ratio to $V_{g,N}$ is
$A^2/(4g^2)(1+O_N(g^{-1}))$, by the marking and genus ratios.
The recursion coefficient $8$, the two orientations, and these
ratios give exactly the last line of \eqref{eq:Cnext}.
For each listed node, the other node degree is fixed by
$u+v=L+d-2$; hence there is no additional pair multiplicity.

We bound all remaining splitting types after division by $V_{g,N}$.
For a fixed small component with $h\geq1$ and $k$ markings, its
cusp contribution has order $g^{1-k-2h}$. Thus $h=1$, $k\geq2$
gives $O_N(g^{-3})$. For $h\geq2$, the proof of
Lemma~\ref{lem:binomial}, with the $h=1$ term omitted, gives
\[
 \sum_{h=2}^{\lfloor g/2\rfloor}
 \frac{h^P r_*^{2h}}{\binom gh^2}=O(g^{-4}).
\]
Indeed, each fixed term is $O(g^{-2h})$, and the two remaining
ranges in that proof are smaller than $g^{-4}$ after choosing its
fixed cutoff sufficiently large. The marking factor in
\eqref{eq:product-fine} is $g^{1-k}\leq1$.
A genus-zero small component with $k\geq4$ gives
$O_N(g^{1-k})=O_N(g^{-3})$. Interchanging components covers both
halves of the genus sum. Monotonicity bounds all their descendants
by cusp coefficients, and node counting costs at most
$C_N(d+L+1)$. These observations give
$C_N(d+1)/g^3$ for the remaining splitting terms after summing $L$.

For the three leading rows, the large child has at least two
markings. Its descendant deficit is at most
$C_N(d+L+1)^2/g$, and its cusp-ratio error is $O_N(g^{-3})$.
The resulting error after summing $L$ is
$O_N((d+1)^2/g^3)$. Finally, the merge and nonseparating remainders
above have bound $C_N(d+L+1)^4/g^3$ per node pair. Counting the
pairs and using the kernel moments bounds the total by
\[
 \frac{C_N}{g^3}\sum_{L\geq0}\Delta_L(d+L+1)^5
 \leq\frac{C_N'(d+1)^5}{g^3}.
\]
The dimension-cutoff tails are exponentially small for
$d\leq\sqrt g$. This proves \eqref{eq:second-adjacent}.

Sum \eqref{eq:second-adjacent} over $0\leq d<k$.
The two coefficients telescope to $E_{N,k}$ and $-Q_{N,k}$, and
the errors sum to $O_N((k+1)^6/g^3)$ for $k\leq\sqrt g$.
Equation~\eqref{eq:Cnext} also gives
$|\mathcal C_{N,d}|\leq C_N(d+1)^3$ and hence
$|Q_{N,k}|\leq C_N(k+1)^4$. When $k>\sqrt g$, monotonicity bounds
the error in \eqref{eq:second-expansion} by
\[
 C_N\left(1+\frac{k^2}{g}+\frac{k^4}{g^2}\right)
 \leq C_N'\frac{(k+1)^6}{g^3}.
\]
The result follows for all $k$.
\end{proof}

For comparison, the exact cubic formula is
\[
 \frac{J^{(0)}_{g,N}(2,0^{N-1})}{V^{(0)}_{g,N}}
 =1-\frac{N-1}{3(5g+2N-7)},\qquad
 Q_{N,2}(0)=\frac{(N-1)(2N-7)}{75}.
\]
The value of $Q_{N,2}(0)$ also follows from \eqref{eq:Cnext}, since
$\Delta_0=1$ and $\Delta_L=0$ for $L>0$.

\subsection{The uniform forcing term and its integral}
For a parent with $n\geq3$ markings, define
\begin{align}
 U_n&=\sum_{k\geq1}w_kQ_{n+1,k},\label{eq:Unext}\\
 \delta_n&=(T_n/A)^2-U_n/A+\frac{(n-2)T_n}{2A},\notag\\
 \mathfrak F_n(t)&=\frac2A\sum_{k\geq2}\frac{\dot s_k}{p_k}
 \left[Q_{n+1,k}-\alpha_nE_{n+1,k}+\delta_n\right].
 \label{eq:forcing}
\end{align}
The polynomial bounds above and \eqref{eq:weights} imply uniform
absolute convergence. Sum \eqref{eq:second-expansion} against the
dilaton weights and invert to obtain
\[
 \frac{V_{g,n+1}}{V_{g,n}}
 =\frac{2g}{A}
 \left(1+\frac{\alpha_n}g+\frac{\delta_n}{g^2}
 +O_n(g^{-3})\right).
\]
Multiplication by the descendant expansion gives the insertion ratio.
After contraction against $\dot s_k$, the leading and constant terms
are those in \eqref{eq:transport-proof}, and the next term gives
\begin{equation}\label{eq:forcing-equation}
 \frac{\dd}{\dd t}\log\mathcal B_{g,n}(t)
 =\frac{\mathfrak F_n(t)}g+O_n(g^{-2}),\qquad n\geq3,
\end{equation}
uniformly on $[0,1]$. The error after contraction is $O(g^{-2})$:
before contraction, the remainder has a sixth-degree polynomial bound.
For $k>\sqrt g$, its magnitude before division by $p_k$ is bounded
directly by $C_n(g+k^2+k^4/g)\leq C_n'(k+1)^6/g^2$.
This direct bound applies in the range where multiplication of the
descendant remainder would give a weaker estimate.

Integration, with the initial value supplied by the cubic model, gives
\begin{equation}\label{eq:b-path}
 \mathcal B_{g,n}(t)=\frac1{\sqrt2\pi^{3/2}}
 \left(1+\frac{\mathfrak b_n(t)}g+O_n(g^{-2})\right),\qquad
 \mathfrak b_n(t)=\beta_n^{(0)}+\int_0^t\mathfrak F_n(v)\,\dd v,
 \quad n\geq3.
\end{equation}
The marking ratio gives the corresponding result for two markings:
\[
 \frac{\mathcal B_{g,3}(t)}{\mathcal B_{g,2}(t)}
 =1+\frac{\alpha_2+1/4}{g}+O(g^{-2})
 =1+\frac{3A_t2^t(1-t)}{2g}+O(g^{-2}).
\]
Hence
\begin{equation}\label{eq:b2-path}
 \mathfrak b_2(t)=-\frac1{80}
 +\int_0^t\mathfrak F_3(v)\,\dd v
 -\frac32 A_t2^t(1-t).
\end{equation}
Only division of uniform expansions is used here; no asymptotic
remainder in the marking relation is differentiated.

Comparison with the endpoint coefficient \eqref{eq:beta-WP} gives
\begin{equation}\label{eq:forcing-integral}
 \int_0^1\mathfrak F_3(t)\,\dd t
 =-\frac{11}{120}+\frac1{6\pi^2}.
\end{equation}
The next subsection evaluates this integral directly from
\eqref{eq:forcing}, without using the published WP correction.

\subsection{A closed formula for the first correction}
Put $\mu_j=\sum_{L\geq0}L^j\Delta_L$, so that $\mu_0=B$ and
$\mu_1=M$. We shall prove
\begin{equation}\label{eq:primitive}
 \mathfrak F_3(t)=\frac{\dd}{\dd t}
 \left\{A\left(\frac13-b_1-\frac32\mu_2+\frac43M
 +\frac7{24}B-\frac{M^2}{3B}\right)\right\}.
\end{equation}
We first evaluate \eqref{eq:Cnext} at $N=4$. Put
$r=L+d$, $\alpha=\alpha_3$, and $\eta=\eta_4$. From
\eqref{eq:alpha}--\eqref{eq:gamma},
\[
 \alpha=\frac14+A\left(\frac B2-2M-2\right),\qquad
 \eta=-\frac14+A\left(-\frac B2-2M-4\right).
\]
For $r\geq3$, the merge child has positive degree $r-1$, and
\[
 \mathcal E_3(r-1)=2A\{B(r-1)^2+(-B+2M)(r-1)\}.
\]
In the nonseparating sum put $j=r-2\geq1$. Of the $j+1$ pairs
$u+v=j$, two have one positive coordinate and $j-1$ have two.
The last term in \eqref{eq:multi-E} is respectively $-2$ and $-3$
for these pairs when $N=5$. It follows that
\[
 \sum_{u+v=r-2}\mathcal E_5(u,v)
 =2A\left[(r-1)\{B(r-2)^2+(B+2M)(r-2)\}-3r+5\right].
\]
Thus the summand in \eqref{eq:Cnext}, before multiplication by
$\Delta_L$, is the following cubic polynomial for $r\geq3$:
\begin{align}
 C(r)={}&-8A^2Br^3+(20A^2B-16A^2M)r^2\notag\\
 &+(-6A^2B+16A^2M+8A^2-A)r\label{eq:C-polynomial}\\
 &-9A^2B+12A^2M+2A^2b_1+2A^2-A/2.\notag
\end{align}
There are three corrections to this polynomial. At $r=0$ every
sum is empty. At $r=1$ only the merge term survives, with child
degree zero. At $r=2$ the nonseparating child is $(0,0)$ and the
degree-one genus-one node is absent. Subtraction of
\eqref{eq:C-polynomial} gives
\[
\begin{array}{c|c|c}
 r&\text{actual summand}&\text{correction }e_r\\\hline
 0&0&-C(0)\\
 1&-6A\alpha&2A^2(1-b_1)\\
 2&6A[-\alpha-\mathcal E_3(1)]+4A\eta
       +4A^2(3+b_1/2)&-10A^2
\end{array}
\]
Set $H_*(d)=\sum_{L\geq0}\Delta_L C(d+L)=\sum_{j=0}^3h_jd^j$.
The moment identities give
\begin{align*}
 h_3={}&-8A^2B^2,\\
 h_2={}&20A^2B^2-40A^2BM,\\
 h_1={}&-AB+A^2(-6B^2+56BM-24B\mu_2+8B-32M^2).
\end{align*}
For $k\geq3$, all three exceptional values have occurred in the
sum over $0\leq d<k$. Since $\Delta_0=1$,
$\Delta_0+\Delta_1=b_1$, and
$\Delta_0+\Delta_1+\Delta_2=b_2$, the polynomial part is
\[
 P(k)=-\sum_{d=0}^{k-1}H_*(d)-e_0-b_1e_1-b_2e_2.
\]
Here the sum of the cubic polynomial is interpreted by the usual
formulas for sums of powers, so $P$ is a polynomial in $k$ of degree
at most four. At $k=1$ the missing corrections are $e_1+b_1e_2$;
at $k=2$ the missing correction is $e_2$. Therefore
\begin{equation}\label{eq:quartic-exceptions}
 Q_{4,k}=P(k)+(2-12b_1)A^2\boldsymbol1_{k=1}
 -10A^2\boldsymbol1_{k=2},\qquad k\geq1.
\end{equation}
Writing $P(k)=\sum_{j=0}^4q_jk^j$, the formulas
$\sum_{d<k}d=(k^2-k)/2$,
$\sum_{d<k}d^2=(2k^3-3k^2+k)/6$, and
$\sum_{d<k}d^3=(k^4-2k^3+k^2)/4$ yield
\[
 q_4=-h_3/4,\qquad q_3=h_3/2-h_2/3,\qquad
 q_2=-h_3/4+h_2/2-h_1/2.
\]
The coefficients of degrees two, three, and four are
\begin{align}
 q_2&=\frac{AB}{2}
 +A^2(16M^2-4B+12B\mu_2-48BM+15B^2),\label{eq:q234}\\
 q_3&=\frac{A^2}{3}(40BM-32B^2),\qquad
 q_4=2A^2B^2.\notag
\end{align}
The constant and linear coefficients cancel from the forcing term.
To verify this and compute the antiderivative, set
$S_j=\sum_{k\geq1}k^jw_k$. The Euler-operator calculation at the
root used in \eqref{eq:S} gives
\begin{align}
 S_0&=A,&S_1&=-A/2,\notag\\
 S_2&=A/4+r_2,& r_2&=-\frac1{8B},\label{eq:Snext}\\
 S_3&=-A/8+r_3,& r_3&=-\frac5{16B}+\frac{M}{4B^2},\notag\\
 S_4&=A/16+r_4,& r_4&=-\frac{23}{32B}+\frac{M}{B^2}
 +\frac{3\mu_2}{8B^2}-\frac{3M^2}{4B^3}.\notag
\end{align}
To compute these moments, keep the curve fixed and replace the
upper endpoint of the action integral by $a$. The coefficient of
$a^{2k+1}$ is an eigenfunction of $(a\partial_a-1)/2$ with
eigenvalue $k$. Apply this operator $j$ times and then set $a=1/2$.
The relation $y(a)=0$ gives $S_1$, and differentiation of
$y(z)=z(1-4z^2)/G(4z^2)$, where
$G(s)=\sum_L\Delta_Ls^L$, gives the remaining formulas using
$G(1)=B$, $G'(1)=M$, and $G''(1)=\mu_2-M$.

Since $w_1=1/12$, $w_2=-b_1/20$, and
$\dot s_k/p_k=-\dot w_k$, the two exceptional terms in
\eqref{eq:quartic-exceptions} contribute
$A^2/6-A^2b_1/2$ to $U_3$ and $-A^2\dot b_1/2$ to the forcing
sum before multiplication by $2/A$. Put $\alpha=\alpha_3$.
Substitution in \eqref{eq:forcing} expresses
$\mathfrak F_3(t)\,\dd t$ as
\begin{align}
 \frac2A\left\{
 \sum_{j=2}^4q_j\left(\frac{r_j}{A}\,\dd A-\dd r_j\right)
 -\left(\alpha^2-\frac\alpha2\right)\dd A
 +\alpha\left(2AB\,\dd S_2+4AM\,\dd S_1-2A\,\dd A\right)
 \right.\notag\\[-2mm]
 \left.\hspace{25mm}
 +\left(\frac A6-\frac{Ab_1}{2}\right)\dd A
 -\frac{A^2}{2}\,\dd b_1\right\}.
 \label{eq:finite-forcing}
\end{align}
The affine terms vanish separately, since
\[
 \frac{S_j}{A}\,\dd A-\dd S_j=0\quad(j=0,1),
 \qquad S_0=A,\quad S_1=-A/2.
\]
For completeness, the differentials needed in the remaining terms are
\begin{align*}
 \dd r_2={}&\frac{\dd B}{8B^2},\\
 \dd r_3={}&\left(\frac5{16B^2}-\frac{M}{2B^3}\right)\dd B
                 +\frac{\dd M}{4B^2},\\
 \dd r_4={}&\left(\frac{23}{32B^2}-\frac{2M}{B^3}
       -\frac{3\mu_2}{4B^3}+\frac{9M^2}{4B^4}\right)\dd B
       +\left(\frac1{B^2}-\frac{3M}{2B^3}\right)\dd M
       +\frac{3\,\dd\mu_2}{8B^2}.
\end{align*}
The coefficient of $\dd A$ in \eqref{eq:finite-forcing} reduces to
\[
 \frac2{A^2}\sum_{j=2}^4q_jr_j+\frac{\alpha}{2A}+\frac13-b_1,
\]
because $A(B/2-2M-2)=\alpha-1/4$.
Direct multiplication of \eqref{eq:q234} and \eqref{eq:Snext} gives
\[
 \sum_{j=2}^4q_jr_j
 =A^2\left(-\frac34\mu_2+\frac23M+\frac7{48}B
             -\frac{M^2}{6B}\right)-\frac{A\alpha}{4}.
\]
For $\dd B$ and $\dd M$, the corresponding contractions are
\begin{align*}
 -\sum_{j=2}^4q_j\partial_Br_j+\frac{\alpha A}{4B}
 &=A^2\left(\frac7{48}+\frac{M^2}{6B^2}\right),\\
 -\sum_{j=2}^4q_j\partial_Mr_j
 &=A^2\left(\frac23-\frac{M}{3B}\right).
\end{align*}
The coefficient of $\dd\mu_2$ is
$-(2/A)q_4\,3/(8B^2)=-3A/2$, and that of $\dd b_1$ is $-A$.
These five computations give

\begin{align}
 &\left(\frac13-b_1-\frac32\mu_2+\frac43M+\frac7{24}B
 -\frac{M^2}{3B}\right)\dd A\notag\\
 &\quad+A\left[-\dd b_1-\frac32\,\dd\mu_2
 +\left(\frac43-\frac{2M}{3B}\right)\dd M
 +\left(\frac7{24}+\frac{M^2}{3B^2}\right)\dd B\right].
 \label{eq:closed-oneform}
\end{align}
This is the differential of the expression in braces in
\eqref{eq:primitive}. The calculation is finite once the moment sums
have been taken; all such contractions were justified above.

At $t=0$, the primitive in \eqref{eq:primitive} has value $-1/80$,
which is $\beta_3^{(0)}$. It therefore equals $\mathfrak b_3(t)$.
For the family considered here,
\begin{equation}\label{eq:mu2}
 b_1=1+t(\pi^2/6-1),\quad M=\tfrac34tB,\quad
 \mu_2=B\left[t\left(\frac{\pi^2}{12}+\frac1{16}\right)
 +\frac9{16}t^2\right].
\end{equation}
The last formula follows by logarithmic differentiation of $H$ and
the identity
$\sum_{j\geq2}j^2/(j^2-1)^2=\pi^2/12+1/16$.
Substitution in \eqref{eq:b2-path} gives
\begin{equation}\label{eq:b2-closed}
 \begin{aligned}
 \mathfrak b_2(t)=A_t\left[-\frac23+
 \left(1-\frac{\pi^2}{6}\right)t
 +2^t\left\{-\frac{29}{24}
 +\left(\frac{77}{32}-\frac{\pi^2}{8}\right)t
 -\frac{33}{32}t^2\right\}\right].
 \end{aligned}
\end{equation}
This proves the two-marking case of
Theorem~\ref{thm:first-volume-correction}:
\begin{equation}\label{eq:explicit-refinement}
 V^{(t)}_{g,2}=
 \frac{\Gamma(2g-1/2)}{\sqrt2\pi^{3/2}
 A_t^{2g-1/2}\sqrt{\kappa_t}}
 \left(1+\frac{\mathfrak b_2(t)}g+O(g^{-2})\right),
 \qquad 0\leq t\leq1.
\end{equation}
In particular, $\mathfrak b_2(0)=-1/16$ and
$\mathfrak b_2(1)=-5/48+1/(6\pi^2)$, in agreement with the endpoint
coefficients. For each fixed $n\geq2$, summation of the first marking
correction gives
\begin{equation}\label{eq:bn-closed}
 \mathfrak b_n(t)=\mathfrak b_2(t)
 +A_t(n-2)\left[\frac{B_t(6-n)}2+3-n-2M_t\right].
\end{equation}
The corresponding expansion has the leading normalization in
\eqref{eq:all-n-asym} and relative remainder $O_n(g^{-2})$.
This completes the proof of Theorem~\ref{thm:first-volume-correction}.
This assertion about the first correction is proved only for
$n\geq2$. The leading asymptotic for all marking counts does not
imply the corresponding correction formula for $n=0$ or $n=1$.

\section{Conventions in the cited formulas}
We collect the conventions relating the formulas used above to their
sources.

Eynard's intersection formula
\cite[Theorem 3.2, (3.10)--(3.12)]{Eyn} applies to the deformed
Airy curve. The change of kernel $K=-2K_E$ cancels the factor
$(-2)^{2-2g-n}$ in that formula. The first forgetful-map identities
appear in (3.6)--(3.7) of the same paper. Grouping the general
permutation formula by cycles gives the logarithmic Schur transform
used in \eqref{eq:shifted}.

The Laurent residue calculation in Section~\ref{sec:exact} gives
the coefficient recursion \eqref{eq:rec} directly. In particular,
it determines the factors $4$ and $8$ and the upper limit
$L\leq3g-3+n-|\mathbf d|$ for the whole family.

The integrated KdV coefficient identity in
\cite[Section 5, proof of Proposition 5.1]{LX} has coefficients
$1/12$ and $1/2$ on its two right-hand terms. Multiplication by
$p_1=12$ gives the coefficients $1$ and $6$ in \eqref{eq:kdv}.

The sine-curve calculation in
\cite[Section 8.1, (8.8)--(8.10)]{Eyn} identifies the recursion
with the Laplace transform of
$\langle\exp(2\pi^2\kappa_1+\tfrac12\sum L_i^2\psi_i)\rangle$.
The same change of kernel removes the prefactor in that calculation.
The resulting stack normalization is $V_{1,1}=\pi^2/12$.

For Painlev\'e~I, the coefficient asymptotic used in
Section~\ref{sec:anchor} is
\cite[Section 2.3, (2.39)]{GIKM}. Its expansion to all orders is
given in (2.43)--(2.45) of that paper. These are the results for the
zero-instanton coefficients. The conjectures concerning the higher
instanton sectors are not used here.

\section{Examples and finite calculations}
For the cubic specialization, the intersection formula and the
recurrence \eqref{eq:c-rec} give
\[
 \begin{array}{c|r}
 g&V^{(0)}_{g,2}\\\hline
 1&12\\
 2&42336\\
 3&609638400\\
 4&19763038181376
 \end{array}
\]
At three markings one has
\[
 \frac{J^{(0)}_{g,3}(2,0,0)}{V^{(0)}_{g,3}}
 =\frac{60g-20}{60g-12}
 =1-\frac{2}{15g}+O(g^{-2}),
\]
in agreement with $E_{3,2}(0)=2/15$.

Consider also the rational kernel
\[
 q(s)=\frac1{(1-s)(1-s/4)},\qquad
 y(z)=z-5z^3+4z^5,\qquad s_2=15,\quad s_3=-60.
\]
The negative value of $s_3$ illustrates that the shifted times
need not be positive even when the kernel coefficients are positive.

For $q(s)=(1-s)^{-1}(1-s/4)^{-1/2}$, the Taylor coefficients and
shifted times are rational. The first times are
$s_2=27/2$, $s_3=-225/8$, and $s_4=-735/16$.
Here $B^2=4/3$, so homogeneous cubic comparisons can be expressed
in rational arithmetic by squaring positive quotients.

The accompanying verification scripts compute intersection numbers by
the DVV recursion and then form the shifted specialization. For each
of the cubic and rational polynomial curves above, they check
29 instances of \eqref{eq:rec} and 8 instances of \eqref{eq:kdv}
in exact rational arithmetic. Additional scripts check the quartic
formula, its cutoff corrections, and the five coefficients of the
differential in \eqref{eq:closed-oneform}. The scripts specify all
test inputs. These calculations check finite identities and
normalizations; the uniform estimates follow from the proofs above.

Two restrictions are relevant in applying the formulas. The stable
genus-zero factors in KdV must be retained, since they can contribute
at order $g^{-1}$. The first descendant correction requires
$n\geq3$, so that each merge term has at least two markings.
The variation of $V_{g,2}$ inserts a third marking and hence lies
within this range.

\end{document}